\theoremstyle{definition}
\newtheorem{Def}{Definition}[section]
\newtheorem{Thm}[Def]{Theorem}
\newtheorem{Prop}[Def]{Proposition}
\newtheorem{Lem}[Def]{Lemma}
\newtheorem{Ques}[Def]{Question}
\newtheorem{Clm}[Def]{Claim}
\newcommand{\adhe}[1]{\overline{#1}}
\newcommand{\accolade}[2]{\left\{\begin{array}{#1} #2 \end{array} \right.}
\newcommand{\pdtsca}[1]{\langle #1 \rangle}
\newcommand{\abs}[1]{\left| #1 \right|}
\title{
	Linear dynamics of an operator associated to the Collatz map
}
\author{
	Vincent B\'ehani
}
\address{
	Univ. Lille, CNRS, UMR 8524 - Laboratoire Paul Painlev\'e, F-59000 Lille, France
}
\email{
	vincent.behani@univ-lille.fr
}
\date{\today}
\begin{document}
	
	\thanks{
		This work was supported in part by the project FRONT of the French National Research Agency (grant ANR-17-CE40-0021) and by the Labex CEMPI (ANR-11-LABX-0007-01). 
	}

	\thanks{
		I am grateful to Sophie Grivaux for our discussions and her helpful proofreadings.
	}

	\keywords{
		Collatz conjecture,
		Hypercyclicity,
		Chaotic operators,
		Frequent Hypercyclicity,
		Ergodicity}

	\subjclass{
		47A16,
		47A35,
		30H20,
		30B10,
		15A18}
	
	\begin{abstract}
		In this paper, we study the dynamics of an operator $\mathcal T$ naturally associated to the so-called \textit{Collatz map}, which maps an integer $n \geq 0$ to $n / 2$ if $n$ is even and $3n + 1$ if $n$ is odd.
		This operator $\mathcal T$ is defined on certain weighted Bergman spaces $\mathcal B ^ 2 _ \omega$ of analytic functions on the unit disk.
		Building on previous work of Neklyudov, we show that $\mathcal T$ is hypercyclic on $\mathcal B ^ 2 _ \omega$, independently of whether the Collatz Conjecture holds true or not.
		Under some assumptions on the weight $\omega$, we show that $\mathcal T$ is actually ergodic with respect to a Gaussian measure with full support, and thus frequently hypercyclic and chaotic.
	\end{abstract}

	\maketitle
		
	\section{Introduction}
	
		Our aim in this paper is to investigate from the point of view of linear dynamics the properties of an operator naturally associated to the so-called Collatz map.
		This map $T _ 0 \colon \mathbf Z _ + \to \mathbf Z _ +$ is defined as
		\[
			T _ 0(n) = 
			\accolade{l l}{
				n / 2 & \text{if $n$ is even;} \\
				3n + 1 & \text{if $n$ is odd.}
			}
		\]
		The dynamics of this simple-looking map are still very mysterious, and the famous Collatz Conjecture (called also the $3n + 1$-Conjecture, or the Syracuse Conjecture) states that the orbit of any integer $k \geq 1$ under the action of $T _ 0$ eventually reaches the point 1.
		
		We will use in this paper the following modified Collatz map $T\colon \mathbf Z _ +\to \mathbf Z _ +$, defined as
		\[
			T(n) = 
			\accolade{l l}{
				n / 2 & \text{if $n$ is even;} \\
				(3n + 1) / 2 & \text{if $n$ is odd}
			}
		\]
		whose dynamics are the same as those of $T _ 0$ as far as the Collatz Conjecture is concerned.
		
		We will here give some of the etablished results about the Collatz Conjecture.
		We refer the reader to the survey \cite{lag10} written by Lagarias for an overview of this problem.
		
		The conjecture had been empirically verified for every $k \leq 20 \times 10 ^ 9$ and is now known to be true for every $k \leq 20 \times 2 ^ {58} \approx 5.764 \times 10 ^ {18}$ (\cite{oli10}).
		
		Since $T(1) = 2$ and $T(2) = 1$, $(1, 2)$ is called the trivial cycle of $T$ and Eliahou showed in \cite{eli93} that a non-trivial cycle of $T$ must have a length greater than 10 439 860 591.
		
		A possible approach to the Collatz Conjecture is to study the proportion of positive integers $k$ having a $T$-orbit that reaches 1.
		Krasikov and Lagarias proved for instance in \cite{kra02} that if $X$ is sufficiently large, the number of integers $1 \leq k \leq X$ whose $T$-orbit reaches 1 is at least $X ^ {0.84}$.
		
		It is also of interest to investigate the time that it takes to a $T$-orbit to reach 1.
		Applegate and Lagarias proved in \cite{app02} that infinitely many positive integers $k$ have a $T$-orbit that reaches 1 in at least $6.143\log(k)$ steps.
		
		Moreover, Tao showed in \cite{tao22} that "almost all" $T$-orbits attain almost bounded values.
		That is to say, if we fix a function $f \colon \mathbf N \to \mathbf R$ such that $f(k) \to + \infty$ as $k \to + \infty$, the infinimum of the $T$-orbit of $k$ is less than $f(k)$ for almost every integer $k \geq 1$, in the sense of logarithmic density.
		
		It is possible to associate to $T$ in a natural way bounded operators on some Hilbert spaces of analytic functions on the unit disk $\mathbf D$, and to link the dynamics of $T$ to those of these operators.
		An approach of this kind was first proposed by Berg and Meinardus in \cite{ber94}. Consider the operator $\mathcal F$ defined on the space 
		$Hol(\mathbf D)$
		of holomorphic functions on the unit disk $\mathbf D$
		in the following way:
		for every $f \in Hol(\mathbf D)$
		with 
		$
			f(z) = 
			\sum _ {n = 0} ^ \infty 
				c _ n z ^ n,
		$
		\[
			\mathcal Ff(z) =
			\sum _ {n = 0} ^ \infty 
				c _ {T(n)} z ^ n,
			\quad
			z \in \mathbf D.
		\]
		Berg and Meinardus expressed in \cite{ber94} the Collatz Conjecture in terms of functional equations and obtained that it is equivalent to the fact that 1 is an eigenvalue of $\mathcal F$ of multiplicity 2. 
		
		Then Neklyudov took in \cite{nek21} a different perpective, and considered the adjoint $\mathcal T$, for the Hardy space $\mathcal H ^ 2(\mathbf D)$, of the operator $\mathcal F$:
		for every $f \in \mathcal H ^ 2(\mathbf D)$ with 
		$
			f(z) = 
			\sum _ {n = 0} ^ \infty
				c _ n z ^ n,
		$
		\[
			\mathcal T f(z) =
			\sum _ {n = 0} ^ \infty
				c _ n z ^ {T(n)},
			\quad
			z \in \mathbf D.
		\]
		This operator $\mathcal T$ is considered in \cite{nek21} as acting on the Bergman space
		\[
			\mathcal B ^ 2 =
			\left\{
				f \colon z \mapsto 
				\sum _ {n = 0} ^ \infty
					c _ n z ^ n
				\in Hol(\mathbf D) ;
				\|f \| ^ 2 =
				\sum _ {n = 0} ^ \infty
					\frac{
						\pi \abs{c _ n} ^ 2
					}{
						n + 1
					}
				< + \infty
			\right\}
		\]
		on $\mathbf D$.
		Since $T(0) = 0$, $T(1) = 2$ and $T(2) = 1$, the vector space $\mathcal H _ 0 = \text{span}[1, z, z ^ 2]$ is invariant under the action of $\mathcal T$, and $\mathcal T$ induces a bounded operator on the quotient space $\mathcal X = \mathcal B ^ 2 / \mathcal H _ 0$, which we still denote by $\mathcal T$.
		Considering this quotient space allows to avoid the trivial cycle $(1, 2)$ and the fixed point 0 of the Collatz map, and simplifies the study of the dynamics of $\mathcal T$.
		Neklyudov proved in \cite{nek21} several results pertaining to the behaviour of the iterates of $\mathcal T$, in connection with the Collatz Conjecture.
		For instance, he showed that if $T$ has no non-trivial cycle, then $\mathcal T$ is hypercyclic, i.e. admits a vector with dense orbit \cite[Theorem 2.2]{nek21}.
		He also undertook a study of the periodic points of $\mathcal T$ and showed that each hypothetical cycle of $T$ and each hypothetical diverging $T$-orbit can be associated to a fixed point of $\mathcal T$.
		He also constructed fixed points of $\mathcal T$ independently of any cycle or diverging orbit of $T$.

		Neklyudov asked in \cite{nek21} about the existence of a criterion allowing to distinguish these fixed points and also whether $\mathcal T$ is chaotic (i.e. hypercyclic and admitting a dense set of periodic points).
		
		In this work, we continue this study of operators on Hilbert spaces of analytic functions associated to the Collatz map and give an affirmative answer to this question of Neklyudov.
		We consider the operator $\mathcal T$ above as acting on weighted Bergman spaces of holomorphic functions on $\mathbf D$ of the form
		\[
			\mathcal B ^ 2 _ \omega = 
			\left\{
				f \colon z \mapsto
				\sum _ {n = 0} ^ \infty
					c _ n z ^ n
				\in Hol(\mathbf D) ;
				\|f \| ^ 2 _ \omega =
				\sum _ {n = 0} ^ \infty
					\frac{
						\abs{c _ n} ^ 2
					}{
						\omega(n)
					}
				< + \infty
			\right\}
		\]
		where $\omega \colon \mathbf Z _ + \to (0, + \infty)$ is a positive weight.
		Just as above, it will be more natural to study the action of $\mathcal T$ on the quotient space $\mathcal X _ \omega = \mathcal B ^ 2 _ \omega / \mathcal H _ 0$.
		Let $\omega _ 0(n) = (n + 1) / \pi$ for every $n \geq 0$.
		Then $\mathcal B ^ 2 _ {\omega _ 0}$ is the classical Bergman space $\mathcal B ^ 2$.
		We write $\mathcal X _ {\omega _ 0} = \mathcal X$.
		
		Whenever the weight $\omega$ is such that $\mathcal T$ defines a bounded operator on $\mathcal X _ \omega$, it is of interest to study the dynamics of $\mathcal T$.
		The main properties that we will consider are those of hypercyclicity (existence of a dense orbit, or, equivalently, topological transitivity),  chaos, frequent hypercyclicity, ergodicity with respect to an invariant measure with full support...
		We give a brief overview of these notions in Section 3 of the present paper, and refer the reader to one of the works \cite{bay09} and \cite{gro11} for more on linear dynamics.
		
		Our first main result states that under some mild assumptions on the weight $\omega$, the operator $\mathcal T$ is hypercyclic. 
		
		\begin{Thm}\label{intro1}
			If 
			$
				\omega
			$ 
			is bounded from below
			and if
			$
				\omega(k2 ^ n) \to + \infty
			$
			as 
			$
				n \to + \infty
			$
			for every $k \geq 3$,
			then $\mathcal T$ is hypercyclic.
		\end{Thm}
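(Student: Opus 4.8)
The plan is to derive hypercyclicity from the Birkhoff transitivity theorem: since $\mathcal X_\omega$ is a separable Hilbert space, it suffices to show that for any two finitely supported vectors $u=\sum_{m\in F}a_m z^m$ and $v=\sum_{m\in G}b_m z^m$ (with $F,G\subseteq\{m\geq 3\}$, writing $z^m$ also for its class in $\mathcal X_\omega$) and any $\varepsilon>0$, one can find $x$ and $j\geq 1$ with $\|x-u\|<\varepsilon$ and $\|\mathcal T^j x-v\|<\varepsilon$; such vectors are dense because $\{z^m:m\geq 3\}$ is an orthogonal basis of $\mathcal X_\omega$ with $\|z^m\|^2=1/\omega(m)$. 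I would use two elementary facts: first, $\mathcal T^n z^m=z^{T^n(m)}$, with the convention that $z^i=0$ in the quotient for $i\leq 2$; second, the ``doubling'' assignment $S z^m=z^{2m}$ is a right inverse of $\mathcal T$ on polynomials, since $T(2m)=m$, so that $S^n z^m=z^{2^n m}$ and $\mathcal T^n S^n=\mathrm{id}$.

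The construction is then essentially forced. For a parameter $j$ set
\[ x:=u+S^j v-S^j\mathcal T^j u, \]
a finitely supported vector, hence a genuine element of $\mathcal X_\omega$. Applying $\mathcal T^j$ and using $\mathcal T^jS^j=\mathrm{id}$ gives $\mathcal T^j x=\mathcal T^j u+v-\mathcal T^j u=v$ \emph{exactly}, so the whole problem reduces to making
\[ \|x-u\|=\bigl\|S^j\bigl(v-\mathcal T^j u\bigr)\bigr\|\leq\|S^jv\|+\|S^j\mathcal T^j u\| \]
small for some large $j$. The first term is $\sum_{m'\in G}|b_{m'}|^2/\omega(2^j m')$, which tends to $0$ because each $m'\in G$ is a fixed integer $\geq 3$ and $\omega(2^j m')\to\infty$ by hypothesis. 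The second term is (up to collisions of indices) $\sum_{m\in F}|a_m|^2/\omega\!\bigl(2^j T^j(m)\bigr)$, so everything comes down to controlling $\omega\bigl(2^j T^j(m)\bigr)$ for the finitely many $m\in F$.

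This is transparent for \emph{bounded} orbits, which is exactly where the weight hypothesis dispenses with any cycle assumption. If the orbit of $m$ is eventually periodic (in particular if it reaches the trivial cycle), then $\{T^j(m):j\geq 0\}$ is a finite subset of $\mathbf Z_+$. The values lying in $\{0,1,2\}$ contribute $0$ in the quotient and need no correction at all; each remaining value $c\geq 3$ is a fixed integer, so $\omega(2^j c)=\omega(c\cdot 2^j)\to\infty$, and taking the minimum over the finite cycle shows $\omega\bigl(2^j T^j(m)\bigr)\to\infty$ as $j\to\infty$. Thus, for all large $j$ simultaneously, every bounded $m\in F$ gives a vanishing contribution, and the existence of a nontrivial cycle of $T$ causes no difficulty whatsoever.

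The main obstacle I anticipate is the case of a \emph{divergent} orbit. Here the elementary identity $a_n:=2^nT^n(m)$ satisfies $a_{n+1}=a_n$ when $T^n(m)$ is even and $a_{n+1}=3a_n+2^n$ when $T^n(m)$ is odd, so $(a_n)$ is nondecreasing and, since every orbit meets an odd value infinitely often, $a_n=2^nT^n(m)\to\infty$; at each time $\tau$ with $T^\tau(m)$ odd equal to $o$ one has $a_\tau=o\cdot 2^\tau$. The trouble is that for an unbounded orbit the odd values $o$ visited form an unbounded set, so $2^jT^j(m)$, although it tends to infinity, does so with \emph{unbounded odd part}; the hypothesis $\omega(k2^n)\to\infty$ is stated for each \emph{fixed} $k\geq 3$ and gives no control over $\omega\bigl(2^jT^j(m)\bigr)$, nor over $\omega\bigl(T^j(m)\bigr)$. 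Overcoming this is the heart of the matter: one must produce, for each divergent $m\in F$, either times $j$ along which $\omega\bigl(T^j(m)\bigr)\to\infty$, or an alternative small corrector $\sum_m a_m z^{N_m}$ with $T^j(N_m)=T^j(m)$ and $\omega(N_m)\to\infty$, i.e. a high-weight preimage of $T^j(m)$ under $T^j$. This is where I expect the remaining strength of the assumptions to be needed — the condition that $\omega$ be bounded below (which also underlies the good behaviour of $\mathcal T$ and prevents $\mathcal T^j u$ from growing) together with a finer, dynamics-sensitive exploitation of the weight, and possibly the known partial results on the Collatz map recalled in the introduction.
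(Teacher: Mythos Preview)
Your construction $x=u+S^jv-S^j\mathcal T^ju$ does give $\mathcal T^jx=v$ exactly, and the term $\|S^jv\|$ is handled correctly. But the obstacle you single out is real and your argument does not close it: for a monomial $z^m$ with divergent $T$-orbit, the quantity you need to control is $\omega\bigl(2^jT^j(m)\bigr)$, and the hypothesis $\omega(k2^n)\to\infty$ for each \emph{fixed} $k\geq 3$ says nothing about this, since the odd part of $2^jT^j(m)$ is unbounded. There is no ``finer, dynamics-sensitive exploitation of the weight'' hidden in the assumptions that would save this step, and none of the partial results on the Collatz map recalled in the introduction rule out divergent orbits; so as written, your proof only works conditionally on every orbit being eventually periodic --- which is precisely the assumption the theorem is meant to remove.

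The paper avoids this difficulty entirely by using \emph{two different} dense sets in the Hypercyclicity Criterion. For the backward part it uses exactly your $Y_0=\mathrm{span}[z^k:k\geq3]$ and your right inverse $S$, with $\|S^nz^k\|^2=1/\omega(k2^n)\to0$. For the forward part, however, it takes
\[
X_0=\mathrm{span}\bigl[h_m(\mu,\cdot):m\geq0,\ \mu\in\mathbf D\bigr],
\]
the span of the explicit eigenvectors $h_m(\mu,\cdot)$ with eigenvalue $\mu\in\mathbf D$; on this set $\mathcal T^nx\to0$ is automatic since $\mathcal T^nh_m(\mu,\cdot)=\mu^nh_m(\mu,\cdot)$. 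The whole weight of the argument therefore shifts to proving that $X_0$ is dense in $\mathcal X_\omega$ (Theorem~\ref{densityeigenvectors}), which is established by an orthogonality argument using only that $\omega$ is bounded below, with no input from Collatz dynamics on divergent orbits. In short: rather than trying to push $\mathcal T^ju$ to zero for polynomial $u$ (which drags in the Collatz conjecture), one replaces $u$ by a nearby eigenvector combination on which forward iterates die for free.
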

		Theorem \ref{intro1} applies in particular to the weight $\omega _ 0$, i.e. to the case where $\mathcal T$ acts on the Bergman space, and shows that the result \cite[Theorem 2.2]{nek21} of Neklyudov is in fact not conditional to the Collatz map having no non-trivial cycle.
		
		Our second main result shows, under slightly stronger assumptions on $\omega$, that $\mathcal T$ acting on $\mathcal X _ \omega$ enjoys some strong dynamical properties:
		
		\begin{Thm}\label{intro2}
			If 
			$
				\omega
			$
			is bounded from below
			and if 
			$
			\sum _ {n = 0} ^ \infty
				1 / \omega(k2 ^ n)
				< + \infty
			$
			for every $k \geq 3$, then $\mathcal T$ is chaotic, fequently hypercyclic and ergodic with respect to a Gaussian invariant measure with full support.
		\end{Thm}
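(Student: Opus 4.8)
The plan is to deduce all three properties from a single source: a large supply of unimodular eigenvectors for $\mathcal T$, organised into continuous eigenvector fields over the unit circle $\mathbf T$. I would use the criterion recalled in Section 3, in the spirit of Bayart--Grivaux (see \cite{bay09}): if one can produce countably many continuous maps $E_k \colon \mathbf T \to \mathcal X_\omega$ with $\mathcal T E_k(\lambda) = \lambda E_k(\lambda)$ for every $\lambda \in \mathbf T$ and $\overline{\mathrm{span}}\{E_k(\lambda) : k \geq 1,\ \lambda \in \mathbf T\} = \mathcal X_\omega$, then $\mathcal T$ admits an ergodic Gaussian measure with full support, and is in particular frequently hypercyclic; moreover chaos follows because, by continuity of the fields and density of the roots of unity in $\mathbf T$, the eigenvectors attached to roots of unity --- which are periodic points of $\mathcal T$ --- are themselves dense. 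So everything reduces to constructing such fields and proving that they span.

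For the construction, set $U_N(\lambda) = \sum_{n \geq 0} \lambda^n z^{N 2^n}$ for $N \geq 3$. A direct computation using $T(N 2^n) = N 2^{n-1}$ for $n \geq 1$ gives $\mathcal T U_N(\lambda) = \lambda U_N(\lambda) + z^{T(N)}$. Hence, for any two indices $p, q$ with $T(p) = T(q)$, the difference $E_{p,q}(\lambda) = U_p(\lambda) - U_q(\lambda)$ satisfies $\mathcal T E_{p,q}(\lambda) = \lambda E_{p,q}(\lambda)$ in $\mathcal X_\omega$, since the inhomogeneous terms $z^{T(p)}$ and $z^{T(q)}$ cancel. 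There are countably many such sibling pairs, indexed by the branching points $m \equiv 2 \pmod 3$ whose two preimages are $2m$ and $(2m-1)/3$. The summability hypothesis is exactly what makes these objects legitimate: since $\|U_N(\lambda)\|_\omega^2 = \sum_{n \geq 0} 1/\omega(N 2^n) < \infty$ for $N \geq 3$, each $U_N(\lambda)$ lies in $\mathcal B^2_\omega$ with norm independent of $\lambda \in \mathbf T$, the fields $\lambda \mapsto E_{p,q}(\lambda)$ are continuous (the partial sums converging uniformly by the Weierstrass test), and they are square-integrable over $\mathbf T$, as required.

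The spanning property is where I expect the real work, and it is the one place where the dynamics of $T$ must be confronted without assuming the Collatz Conjecture. I would argue by duality: let $\psi = \sum_N \psi_N z^N \in \mathcal X_\omega$ be orthogonal to every $E_{p,q}(\lambda)$, and put $a_N = \psi_N / \omega(N)$, so that $\sum_N |a_N|^2 \omega(N) = \|\psi\|_\omega^2 < \infty$. Taking Fourier coefficients in $\lambda$ of the scalar functions $\lambda \mapsto \langle E_{p,q}(\lambda), \psi \rangle$ turns orthogonality into the relations $a_{(2m) 2^n} = a_{(2m-1)/3\, \cdot\, 2^n}$ for all branching points $m$ and all $n \geq 0$; equivalently, since $m = (3\ell+1)/2 \equiv 2 \pmod 3$ automatically, one has $a_{\ell 2^j} = a_{(3\ell + 1) 2^j}$ for every odd $\ell$ and every $j \geq 0$. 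The key point is then a propagation along the accelerated (Syracuse) dynamics: starting from an odd index $\ell$, writing $3\ell + 1 = 2^{n_1} \ell_1$ with $\ell_1$ odd, applying the relation again at $\ell_1$, and iterating, one shows that $a_\ell$ equals the value of $a$ at the infinitely many indices $\ell_k 2^{M_k}$, where $\ell_{k+1} = \mathrm{odd}(3\ell_k + 1)$ and $M_k = n_1 + \cdots + n_k \geq k$. Since the $2$-adic valuations $M_k$ are strictly increasing, these indices are pairwise distinct and tend to infinity, regardless of whether the orbit of $\ell$ reaches $1$, cycles, or diverges. Because $\omega$ is bounded below, say by $c > 0$, the convergent sum $\sum_N |a_N|^2 \omega(N)$ dominates $|a_\ell|^2 \sum_k \omega(\ell_k 2^{M_k}) \geq |a_\ell|^2 \sum_k c = +\infty$ unless $a_\ell = 0$. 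Thus $a_\ell = 0$ for every odd $\ell$, and the same argument run at an arbitrary level $j$ kills the remaining coefficients, so $\psi = 0$ and the fields span.

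With spanning in hand, the criterion above yields at once that $\mathcal T$ is ergodic with respect to a Gaussian measure of full support, hence frequently hypercyclic, and chaotic. The decisive --- and pleasantly unconditional --- step is the spanning argument: it is precisely the combination of the two hypotheses on $\omega$ that drives it, the summability guaranteeing that the eigenvector fields live in $\mathcal B^2_\omega$ and vary continuously over $\mathbf T$, and the lower bound on $\omega$ forcing, through the unbounded spreading of mass along backward $T$-orbits, that no nonzero vector can annihilate them.
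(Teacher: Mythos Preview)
Your strategy is the same as the paper's --- build eigenvector fields over $\mathbf T$ from the functions $U_N(\lambda)=\sum_{n\ge 0}\lambda^n z^{N2^n}$, prove they span $\mathcal X_\omega$ by a duality argument that propagates coefficients along Syracuse orbits, and then invoke the Bayart--Grivaux criterion. The spanning proof is the crux, and your propagation idea is exactly right in spirit. However, there is a genuine gap in the collection of fields you use.

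You take only the differences $E_{p,q}=U_p-U_q$ for sibling pairs $p,q\ge 3$. But the branching point $m=2$ has preimages $4$ and $1$, so the pair $(4,1)$ is excluded, and you have no field involving $U_4$. This matters: in the quotient $\mathcal X_\omega$ one has $\mathcal T z^4=0$ (since $T(4)=2$), so $U_4(\lambda)$ is \emph{itself} an eigenvector, with no sibling needed. Without it your family does not span: the monomial $z^4$ is orthogonal to every $E_{p,q}(\lambda)$, because no index of the form $p2^n$ or $q2^n$ with $p,q\ge 3$ in a genuine sibling pair equals $4$. Correspondingly, your relation $a_{\ell 2^j}=a_{(3\ell+1)2^j}$ is only available for odd $\ell\ge 3$, not for $\ell=1$; so when the Syracuse orbit of $\ell$ reaches $1$ at step $K$, your chain stops at $a_\ell=a_{2^{M_K}}$ after finitely many steps, and you cannot conclude $a_\ell=0$. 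The claim that the argument works ``regardless of whether the orbit of $\ell$ reaches $1$'' is therefore not justified as written.

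The fix is exactly what the paper does: add the single field $h_0(\lambda,\cdot)=U_4(\lambda)$ to the collection. Orthogonality to $h_0(\lambda,\cdot)$ for all $\lambda\in\mathbf T$ forces $a_{2^{n+2}}=0$ for every $n\ge 0$, which both kills the coefficients at powers of $2$ directly and closes the case of orbits reaching $1$ (since then $a_\ell=a_{2^{M_K}}=0$). With this one addition your argument goes through and coincides with the paper's.
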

	
		In particular, $\mathcal T$ acting on the Bergman space $\mathcal X$ is chaotic.
		This provides an affirmative answer to Neklyudov's question.
		
		The paper is organized as follows. 
		We first provide in Section 2 conditions on the weight $\omega$ ensuring that $\mathcal T$ acts as a bounded operator on $\mathcal X _ \omega$ (Proposition \ref{norm}); building on a description given in \cite{nek21} of some eigenvectors of $\mathcal T$, we show (still under some mild conditions on $\omega$) that these eigenvectors span a dense subspace of $\mathcal X _ \omega$ (Theorem \ref{densityeigenvectors}) and that the adjoint $\mathcal T ^ *$ of $\mathcal T$ on $\mathcal X _ \omega$ has empty point spectrum (Theorem \ref{noeigenvalues}).
		After recalling in Section 3 some background in linear dynamics, we prove Theorems \ref{intro1} and \ref{intro2} above.
		The proofs of these two results rely heavily on the properties of the eigenvectorfields of $\mathcal T$ presented in Section 2, and in particular on Theorem \ref{densityeigenvectors}.
		Section 4 collects some additional results as well as some open questions.

	\section{Boundedness of $\mathcal T$ and properties of its eigenvectors}

	Let $\omega$ be a positive weight on $\mathbf Z _ +$.
	We first consider the operator $\mathcal T$ acting on the space
	$
		\mathbf C[\xi] / \text{span}[1, \xi, \xi ^ 2]
	$, defined by
	\[
		\mathcal T 
		\sum _ {n = 3} ^ d
			c _ n \xi ^ n
		=
		\sum _ {3 \leq n \leq d, T(n) \geq 3}
			c _ n \xi ^ {T(n)}
		\quad
		\text{for every }
		\sum _ {n = 3} ^ d
			c _ n \xi ^ n
		\in \mathbf C[\xi] / \text{span}[1, \xi, \xi ^ 2],
	\]
	and give conditions on $\omega$ implying that $\mathcal T$ can be extended to a bounded operator on $\mathcal X _ \omega$.
		
	\begin{Prop}\label{norm}		
		The operator $\mathcal T$, defined for every 
		$
			f \in \mathcal X _ \omega
		$
		with 
		$
			f(z) =
			\sum _ {n = 3} ^ \infty
				c _ n z ^ n
		$
		by
		\[
			\mathcal T 
			\sum _ {n = 3} 
				c _ n z ^ n 
			=
			\sum _ {T(j) \geq 3}
				c _ j z ^ {T(j)},
			\quad
			z \in \mathbf D,	
		\] 
		is a bounded operator acting on $\mathcal X _ \omega$ if and only if the three sequences 
		$
			(\omega(6m) / \omega(3m)) _ {m \geq 1}
		$,
		$
			(\omega(6m + 2) / \omega(3m + 1)) _ {m \geq 1}
		$
		and
		$
			((\omega(6m + 4) + \omega(2m + 1)) / \omega(3m + 2)) _ {m \geq 1}
		$
		are bounded.
		In this case
		\[
			\|\mathcal T \| _ \omega ^ 2 =
				\max\left\{
					\sup _ {m \geq 1}
						\frac{\omega(6m)}{\omega(3m)}, 
					\sup _ {m \geq 1}
						\frac{\omega(6m + 2)}{\omega(3m + 1)},
					\sup _ {m \geq 1}
						\frac{
							\omega(6m + 4) + \omega(2m + 1)
						}{
							\omega(3m + 2)
					}
			\right\}.
		\]
		Moreover for every $n \geq 0$
		\[
			\|\mathcal T ^ n \| _ \omega ^ 2 =
			\sup _ {k \geq 3}
				\sum _ {T ^ n(j) = k}
					\frac{
						\omega(j)
					}{
						\omega(k)
					}.
		\]
		Taking $\omega = \omega _ 0$ we obtain in particular that $\mathcal T$ is bounded on $\mathcal X$ and that $\|\mathcal T\| ^ 2 = 8 / 3$.
	\end{Prop}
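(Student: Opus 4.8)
The plan is to exploit that $\mathcal T$ acts "diagonally" along the monomial fibers determined by the preimages of the modified Collatz map, so that the computation of the norm decouples into a supremum of elementary ratios. First I would record the preimage structure of $T$: for a target $k \geq 3$, solving $T(j) = k$ always gives the even preimage $j = 2k$, and an additional odd preimage $j = (2k-1)/3$ exactly when $k \equiv 2 \pmod 3$. Splitting $k$ according to its residue modulo $3$, this yields the preimage sets $\{6m\}$ for $k = 3m$, $\{6m+2\}$ for $k = 3m+1$, and $\{2m+1,\, 6m+4\}$ for $k = 3m+2$, with $m \geq 1$ in each case so that all indices involved are $\geq 3$. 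These sets are pairwise disjoint and exhaust the admissible source indices.

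Since $\mathcal T z^j = z^{T(j)}$ when $T(j) \geq 3$ and $\mathcal T z^j = 0$ otherwise, the coefficient of $z^k$ in $\mathcal T f$ is $\sum_{T(j) = k} c_j$, whence
\[
	\|\mathcal T f\| _ \omega ^ 2 = \sum _ {k \geq 3} \frac{1}{\omega(k)} \left| \sum _ {T(j) = k} c _ j \right| ^ 2, \qquad \|f\| _ \omega ^ 2 = \sum _ {k \geq 3} \ \sum _ {T(j) = k} \frac{|c _ j| ^ 2}{\omega(j)},
\]
the second identity being obtained by regrouping the denominator over the fibers, which is legitimate because they partition the indices $j \geq 3$. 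On each fixed fiber, Cauchy--Schwarz gives $|\sum _ j c _ j| ^ 2 \leq (\sum _ j \omega(j))(\sum _ j |c _ j| ^ 2 / \omega(j))$ with equality for $c _ j \propto \omega(j)$, so the optimal local ratio equals $\frac{1}{\omega(k)} \sum _ {T(j) = k} \omega(j)$. The mediant inequality then bounds the global quotient $\|\mathcal T f\| _ \omega ^ 2 / \|f\| _ \omega ^ 2$ by the supremum of these local ratios, while testing with an $f$ supported on a single fiber shows this bound is sharp. This produces exactly the three suprema of the statement and the formula for $\|\mathcal T\| _ \omega ^ 2$, boundedness being equivalent to the finiteness of all three.

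The formula for the powers follows from the same argument applied to $\mathcal T ^ n$, once I verify that $\mathcal T ^ n z ^ j = z ^ {T ^ n(j)}$ when $T ^ n(j) \geq 3$ and $0$ otherwise. The one point requiring a small argument is that $T(\{0,1,2\}) \subseteq \{0,1,2\}$, so an orbit that ever drops below $3$ remains there; consequently $T ^ n(j) \geq 3$ already forces every intermediate iterate to be $\geq 3$, and no term can be alternately killed and revived. The fibers $\{j : T ^ n(j) = k\}$ are finite (with at most $2 ^ n$ elements) and pairwise disjoint, so the identical Cauchy--Schwarz and mediant computation gives $\|\mathcal T ^ n\| _ \omega ^ 2 = \sup _ {k \geq 3} \sum _ {T ^ n(j) = k} \omega(j) / \omega(k)$.

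Finally, for $\omega _ 0(n) = (n+1)/\pi$ the three sequences become $(6m+1)/(3m+1)$, $(6m+3)/(3m+2)$ and $(8m+7)/(3m+3)$; each is increasing in $m$, with limits $2$, $2$ and $8/3$ respectively, so the maximum of the three suprema equals $8/3$ and $\|\mathcal T\| ^ 2 = 8/3$. I expect the main obstacle to be purely organizational: setting up the fiber decomposition cleanly and justifying the reduction of the operator norm to a supremum of finite-dimensional Rayleigh quotients, since the underlying Collatz arithmetic is elementary.
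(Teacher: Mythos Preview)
Your proposal is correct and follows essentially the same route as the paper: Cauchy--Schwarz on each fiber $\{j:T^n(j)=k\}$ for the upper bound, and testing with a vector supported on a single fiber (with $c_j\propto\omega(j)$) for sharpness. One small slip worth fixing: the fibers do \emph{not} partition all of $\{j\geq 3\}$ (e.g.\ $j=4$ is missed since $T(4)=2$, and more indices drop out for higher $n$), so your displayed identity for $\|f\|_\omega^2$ is actually an inequality $\geq$; this is harmless because the missing terms contribute to $\|f\|_\omega^2$ but not to $\|\mathcal T^n f\|_\omega^2$, which only strengthens the upper bound.
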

	\begin{proof}
		Suppose that the three sequences above are bounded, which means that the maximum
		\[
			\max\left\{
				\sup _ {m \geq 1}
					\frac{
						\omega(6m)
					}{
						\omega(3m)
					},
				\sup _ {m \geq 1}
					\frac{
						\omega(6m + 2)
					}{
						\omega(3m + 1)
					},
			\sup _ {m \geq 1}
				\frac{
					\omega(6m + 4) + \omega(2m + 1)
				}{
					\omega(3m + 2)
				}
			\right\}
			=
			\sup _ {k \geq 3}
				\sum _ {T(j) = k} 
					\frac{
						\omega(j)
					}{
						\omega(k)
					}
		\]
		is finite.
		Let $n \geq 0$ and
		$
			f \in \mathcal X _ \omega
		$
		with 
		$
			f(z) = 
			\sum _ {k = 3} ^ \infty
				c _ k z ^ k
		$.
		We have
		\begin{align*}
			\mathcal T ^ n f(z)
			=
			\sum _ {T ^ n (j) \geq 3}
				c _ j z ^ {T ^ n(j)}
			=
			\sum _ {k = 3} ^ \infty
				\sum _ {T ^ n(j) = k}
					c _ j
				z ^ k,
			\quad 
			z \in \mathbf D,
		\end{align*}
		and
		\[
			\|\mathcal T ^ n f \| _ \omega ^ 2 =
			\sum _ {k = 3} ^ \infty
				\frac 1 {\omega(k)}
				\abs{
					\sum _ {T ^ n(j) = k}
						c _ j
				} ^ 2
			=
			\sum _ {k = 3} ^ \infty
				\frac 1 {\omega(k)}
				\abs{
					\sum _ {T ^ n(j) = k}
						\sqrt{\omega(j)}
						\frac{
							c _ j
						}{
							\sqrt{\omega(j)}
						}
				} ^ 2.
		\]
		By the Cauchy-Schwarz's inequality
		\[
			\|\mathcal T ^ n f\| ^ 2 _ \omega
			\leq
			\sum _ {k = 3} ^ \infty
				\sum _ {T ^ n(j) = k}
					\omega(j)
				\sum _ {T ^ n(j) = k}
					\frac{
						\abs{c _ j} ^ 2
					}{
						\omega(j)
					}
			\leq
			\left(
				\sup _ {k \geq 3}
					\sum _ {T ^ n(j) = k}
						\frac{
							\omega(j)
						}{
							\omega(k)
						}
			\right)
			\|f \| ^ 2 _ \omega,
		\]
		which gives that $\mathcal T$ is bounded if the sequence 
		$
			(\sum _ {T(j) = k}
				\omega(j) / \omega(k)
			) _ {k \geq 3}
		$
		is bounded and that
		$
			\|\mathcal T ^ n \| ^ 2 _ \omega
			\leq
			\sup _ {k \geq 3}
				\sum _ {T ^ n(j) = k}
					\omega(j) / \omega(k).
		$
		
		Conversely, let $n \geq 0$ and suppose that $\mathcal T$ is a bounded operator on $\mathcal X _ \omega$.
		Let $(k _ p) _ {p \geq 1}$ such that 
		$
			\sum _ {T ^ n(j) = k _ p}
					\omega(j)
				/
				\omega(k _ p)
			\to 
			\sup _ {k \geq 3}
				\sum _ {T ^ n(j) = k}
						\omega(j)
					/
					\omega(k)
		$
		as $p \to + \infty$.
		Consider the function
		$
			f _ p \in \mathcal X _ \omega
		$
		with 
		$
			f(z) =
			\sum _ {T ^ n(j) = k _ p}
				\omega(j)z ^ j
		$
		satisfying
		$
			\|f _ p\| ^ 2 _ \omega 
			=
			\sum _ {T ^ n(j) = k _ p}
					\abs{\omega(j)} ^ 2
				/
					\omega(j)
			=
			\sum _ {T ^ n(j) = k _ p}
				\omega(j)
		$
		and
		\[
			\|\mathcal T ^ n f _ p\| _ \omega ^ 2 
			=
			\frac 1 {\omega(k _ p)}
			\abs{
				\sum _ {T ^ n(j) = k _ p}
				\omega(j)
			} ^ 2
			=
			\left(
				\sum _ {T ^ n(j) = k _ p}
					\frac{\omega(j)}{\omega(k _ p)}
			\right)
			\|f _ p\| _ \omega ^ 2.
		\]
		So we have 
		$
				\|\mathcal T ^ n f _ p \| _ \omega ^ 2
				/
				\|f _ p\| _ \omega ^ 2
			\to 
			\sup _ {k \geq 3}
				\sum _ {T ^ n(j) = k}
					\omega(j)
				/
				\omega(k)
		$
		as $p \to + \infty$
		and it follows that
		$
			\sup _ {k \geq 3}
				\sum _ {T ^ n(j) = k}
					\omega(j) / \omega(k)
			\leq 
			\|\mathcal T ^ n \| ^ 2 _ \omega
			< + \infty
		$
		for every $n \geq 0$,
		which concludes the proof.
	\end{proof}

	For instance, it follows from Proposition \ref{norm} that if $\omega$ has polynomial growth, then $\mathcal T$ is a bounded operator on $\mathcal X _ \omega$.\smallskip
	
	Eigenvectors play a very important role in linear dynamics. 
	We will thus be interested in the eigenvalues and the eigenvectors of $\mathcal T$, and will provide some condition on the weight $\omega$ ensuring that the eigenvectors of $\mathcal T$ span a dense subspace of $\mathcal X _ \omega$.
	
	\begin{Def}
		For every $\mu \in \mathbf C$ and every $m \geq 1$, consider the analytic functions on $\mathbf D$
		\[
			h _ m(\mu, \cdot) \colon z \mapsto 
			\sum _ {n = 0} ^ \infty
				\mu ^ n 
				\left(
					z ^ {(6m + 4) 2 ^ n} - z ^ {(2m + 1)2 ^ n}
				\right)
			\quad
			\text{and}
			\quad
			h _ 0(\mu, \cdot) \colon z \mapsto 
			\sum _ {n = 0} ^ \infty
				\mu ^ n z ^ {2 ^ {n + 2}}.
		\]
	\end{Def}

	These functions has been introduced by Neklyudov in \cite[Theorem 2.3]{nek21} and are the only eigenvectors that will be needed in order to obtain results concerning the spanning of dense subspaces.

	\begin{Prop}\label{eigenvectors}
		For every $m \geq 0$ and every $\mu \in \mathbf C$, the vector $h _ m(\mu, \cdot)$
		belongs to $\mathcal X _ \omega$ if and only if the series 
		$
			\sum _ {n \geq 0}
				\abs{\mu} ^ {2n} / \omega((2m + 1) 2 ^ n)
		$ 
		and
		$
			\sum _ {n \geq 0}
				\abs{\mu} ^ {2n} / \omega((6m + 4) 2 ^ n)
		$ 
		converge.
		In this case 
		$
			\mathcal Th _ m(\mu, \cdot) = \mu h _ m (\mu, \cdot)
		$. 
		
		In particular if $\omega$ is bounded from below then $h _ m(\mu, \cdot)$ belongs to $\mathcal X _ \omega$ for every $m \geq 0$ and every $\mu \in \mathbf D$.		
		Moreover if $\omega = \omega _ 0$ then $h _ m(\mu, \cdot)$ belongs to $\mathcal X$ for every $m \geq 0$ and every $\mu \in \mathbf C$ such that $\abs\mu < \sqrt 2$.
	\end{Prop}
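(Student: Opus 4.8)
The plan is to verify the claimed eigenvector equation by a direct computation on the series, and then to read off the membership conditions from the definition of the norm on $\mathcal X_\omega$.

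The plan is to prove the eigenvalue relation by a direct term-by-term computation on the defining series, and then to obtain the membership criterion by computing the norm $\|h_m(\mu,\cdot)\|_\omega^2$ explicitly.

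First I would establish the arithmetic identities that drive everything. For the shift, note that $(6m+4)2^n$ and $(2m+1)2^n$ are even as soon as $n\geq 1$, so that $T((6m+4)2^n) = (6m+4)2^{n-1}$ and $T((2m+1)2^n) = (2m+1)2^{n-1}$; applying $\mathcal T$ thus amounts to a backward shift in the index $n$, which produces the factor $\mu$. The crucial point is the behaviour at $n=0$: since $6m+4$ is even and $2m+1$ is odd, one has $T(6m+4) = 3m+2$ and $T(2m+1) = (3(2m+1)+1)/2 = 3m+2$, so the two $n=0$ monomials collide and cancel. Applying $\mathcal T$ to $h_m(\mu,\cdot)$ term by term and reindexing then yields $\mathcal T h_m(\mu,\cdot) = \mu h_m(\mu,\cdot)$ for $m\geq 1$. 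For $m=0$ the same scheme applies, the only difference being that the $n=0$ term $z^4$ is killed not by cancellation but by the truncation $T(4)=2<3$ built into the definition of $\mathcal T$; since $T(2^{n+2}) = 2^{n+1}$ for $n\geq 1$, reindexing again gives $\mathcal T h_0(\mu,\cdot) = \mu h_0(\mu,\cdot)$.

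Next, for the membership criterion, I would check that all exponents appearing in $h_m(\mu,\cdot)$ are pairwise distinct, so that $\|h_m(\mu,\cdot)\|_\omega^2$ splits as a genuine sum over coefficients. For $m\geq 1$ this reduces to comparing $2$-adic valuations and odd parts: within each of the two families $\{(6m+4)2^n\}_n$ and $\{(2m+1)2^n\}_n$ the valuations are strictly increasing, and the two families have different odd parts (the odd part of $3m+2$ cannot equal $2m+1$ when $m\geq 1$), so no exponent is repeated. One then reads off $\|h_m(\mu,\cdot)\|_\omega^2 = \sum_{n\geq 0} |\mu|^{2n}/\omega((6m+4)2^n) + \sum_{n\geq 0} |\mu|^{2n}/\omega((2m+1)2^n)$, which is finite precisely when both series converge. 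The case $m=0$ is handled the same way, the two stated series being equivalent there up to a reindexing and a harmless factor $|\mu|^{\pm 4}$.

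Finally the two special cases follow by elementary comparison. If $\omega \geq \delta > 0$, each series is dominated by $\delta^{-1}\sum_n |\mu|^{2n}$, which converges for $\mu \in \mathbf D$. For $\omega = \omega_0$ I would apply the ratio test: the general term of $\sum_n |\mu|^{2n}/\omega_0(k2^n)$ has successive ratio tending to $|\mu|^2/2$, so the series converges exactly when $|\mu| < \sqrt 2$. The main obstacle, which is really the heart of the statement, is the bookkeeping at $n=0$: recognizing that the collision $T(6m+4)=T(2m+1)=3m+2$ for $m\geq 1$, and the truncation $T(4)=2<3$ for $m=0$, are exactly what make these functions eigenvectors, while everything else is a clean backward shift.
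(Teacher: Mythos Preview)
Your proposal is correct and follows essentially the same route as the paper: a direct term-by-term application of $\mathcal T$ using $T((6m+4)2^n)=(6m+4)2^{n-1}$, $T((2m+1)2^n)=(2m+1)2^{n-1}$ for $n\geq 1$, together with the collision $T(6m+4)=T(2m+1)=3m+2$ at $n=0$ (and the truncation $T(4)=2<3$ for $m=0$), followed by the norm computation. If anything, you are slightly more careful than the paper, which does not spell out the distinctness of the exponents or the equivalence of the two series in the case $m=0$; your additions there are correct and welcome.
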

	\begin{proof}
		Let $\mu \in \mathbf C$ and $m \geq 1$.
		The function 
		$
			h _ m(\mu, \cdot)
		$
		belongs to
		$
			\mathcal X _ \omega
		$
		if and only if 
		$
			\sum _ {n = 0} ^ \infty
				\abs\mu ^ {2n} / \omega((6m + 4)2 ^ n)
			+
			\sum _ {n = 0} ^ \infty
				\abs\mu ^ {2n} / \omega((2m + 1)2 ^ n)
			< + \infty
		$.
		And
		$
			h _ 0(\mu, \cdot)
		$
		belongs to 
		$
			\mathcal X _ \omega
		$
		if and only if the series
		$
			\sum _ {n = 0} ^ \infty 
				\abs\mu ^ {2n} / \omega(2 ^ {n + 2})
			< + \infty
		$.
		
		Moreover for every $m \geq 1$ and every $z \in \mathbf D$ we have
		\begin{align*}
			\mathcal T h _ m(\mu, \cdot)(z)
			& =
			\sum _ {n = 0} ^ \infty
				\mu ^ n 
				\left(z ^ {T((6m + 4) 2 ^ n)} - z ^ {T((2m + 1)2 ^ n)}\right) \\
			& =
			\sum _ {n = 1} ^ \infty
				\mu ^ n
				\left(z ^ {(6m + 4) 2 ^ {n - 1}} - z ^ {(2m + 1) 2 ^ {n - 1}}\right)
			+ 
			z ^ {3m + 2} - z ^ {3m + 2} \\
			& = 
			\mu h _ m(\mu, z)
		\end{align*}
		and
		\begin{align*}
			\mathcal T h _ 0(\mu, \cdot)(z)
			=
			\sum _ {n = 0} ^ \infty
				\mu ^ n z ^ {T(2 ^ {n + 2})} 
			=
			\sum _ {n = 1} ^ \infty
				\mu ^ n z ^ {2 ^ {n + 1}}
			+
			z ^ 2
			= 
			\mu h _ 0(\mu, z),
		\end{align*}
		which shows that the functions $h _ m(\mu, \cdot)$, $m \geq 0$, are eigenvectors of $\mathcal T$ associated to the eigenvalue $\mu$ as soon as they belong to $\mathcal X _ \omega$.
	\end{proof}

	Our aim is now to show that if the weight $\omega$ is bounded from below, the eigenvectors $h _ m(\mu, \cdot)$, $m \geq 0$, $\mu \in \mathbf D$, span a dense subspace of $\mathcal X _ \omega$.
	
	\begin{Thm}\label{densityeigenvectors}
		If 
		$
			\omega
		$
		is bounded from below,
		then 
		$
			\text{span}[
				h _ m(\mu, \cdot);
				m \geq 0, \mu \in \mathbf D
			]
		$ 
		is dense in $\mathcal X _ \omega$.
		
		In particular if $\omega = \omega _ 0$ then 
		$
			\{
				h _ m(\mu, \cdot); 
				m \geq 0, 
				\mu \in \mathbf D
			\}
		$
		spans a dense subspace of $\mathcal X$.
	\end{Thm}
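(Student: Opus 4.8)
The plan is to exploit the Hilbert space structure of $\mathcal{X}_\omega$ and prove the dual statement: the only vector $g \in \mathcal{X}_\omega$ orthogonal to every eigenvector $h_m(\mu, \cdot)$, $m \geq 0$, $\mu \in \mathbf{D}$, is $g = 0$. Since $\overline{\text{span}[h_m(\mu,\cdot)]} = \big(\{h_m(\mu,\cdot)\}^\perp\big)^\perp$, this is equivalent to density. So I would fix $g(z) = \sum_{k \geq 3} d_k z^k \in \mathcal{X}_\omega$ with $\langle g, h_m(\mu, \cdot)\rangle_\omega = 0$ for all $m \geq 0$ and all $\mu \in \mathbf{D}$, and aim to show that every coefficient $d_k$ vanishes.

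First I would turn each orthogonality condition into an infinite family of scalar relations. For fixed $m$, the map $\mu \mapsto \langle h_m(\mu, \cdot), g\rangle_\omega$ is, by continuity of the inner product together with the defining series of $h_m$, a holomorphic function on $\mathbf{D}$ given by an explicit power series in $\mu$ whose coefficients are the inner products of $g$ against the individual monomial differences. Since it vanishes identically on $\mathbf{D}$, all its Taylor coefficients vanish. For $m = 0$ this yields $d_{2^j} = 0$ for every $j \geq 2$. For $m \geq 1$, writing $\ell = 2m + 1 \geq 3$ and noting that $6m + 4 = 3\ell + 1$, it yields, for every $n \geq 0$,
\[
\frac{d_{(3\ell + 1)2^n}}{\omega((3\ell + 1)2^n)} = \frac{d_{\ell 2^n}}{\omega(\ell 2^n)}.
\]
It is then convenient to set $c_k = d_k / \omega(k)$ for $k \geq 3$, so that these relations read $c_{(3\ell + 1)2^n} = c_{\ell 2^n}$ for all odd $\ell \geq 3$ and all $n \geq 0$, while $c_{2^j} = 0$ for $j \geq 2$.

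The heart of the argument is a dynamical propagation of these equalities. Fix $k \geq 3$ and write $k = \ell 2^n$ with $\ell$ its odd part. If $\ell = 1$ then $c_k = 0$ already. If $\ell \geq 3$, apply the relation to pass from $k$ to $k' = (3\ell + 1)2^n$; crucially, since $3\ell + 1$ is even, the $2$-adic valuation strictly increases, $v_2(k') = v_2(k) + v_2(3\ell + 1) > v_2(k)$. Iterating, I obtain a sequence $k = k_0, k_1, k_2, \dots$ of integers, all carrying the same value of $c$, with strictly increasing $2$-adic valuations, hence pairwise distinct. Two cases arise: either some $k_i$ has odd part $1$, i.e. is a power of $2$ (necessarily $\geq 4$, so of the form $2^j$ with $j \geq 2$), in which case $c_k = c_{k_i} = 0$; or the process never terminates, producing infinitely many distinct integers on which $c$ is constant equal to $c_k$.

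Finally I would invoke the hypothesis that $\omega$ is bounded from below, say by $\delta > 0$. Since $g \in \mathcal{X}_\omega$, one has $\sum_{k \geq 3} \omega(k) \abs{c_k}^2 = \|g\|_\omega^2 < +\infty$, whence $\sum_{k \geq 3} \abs{c_k}^2 \leq \delta^{-1}\|g\|_\omega^2 < +\infty$. If $c_k$ were nonzero on an infinite set of distinct integers, this sum would diverge; therefore the constant value along any non-terminating propagation chain must be $0$. In both cases $c_k = 0$ for every $k \geq 3$, so $g = 0$ and the span is dense. The main obstacle, and the precise place where "$\omega$ bounded below" is indispensable, is exactly this last step: one might a priori fear that relating $c_k$ to a known zero requires the Collatz orbit of $\ell$ to reach $1$, which is the open conjecture itself. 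The observation that the $2$-adic valuation strictly increases along the forward relation sidesteps this entirely, forcing the propagation chains to be infinite even when the underlying Syracuse orbit cycles or diverges, and letting square-summability rather than Collatz close the argument.
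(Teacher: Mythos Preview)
Your proposal is correct and follows essentially the same route as the paper: take $g$ orthogonal to all $h_m(\mu,\cdot)$, use holomorphy in $\mu$ to extract the coefficient relations, propagate these along a chain of indices, and close via the dichotomy (chain reaches a power of $2$ versus infinite chain) using the lower bound on $\omega$. Your packaging is somewhat tidier---the normalization $c_k = d_k/\omega(k)$ and the observation that the $2$-adic valuation strictly increases replace the paper's explicit sequence construction in Lemma~\ref{sequences}---but the underlying chain $(k_i)$ and the case split coincide exactly with the paper's $(6m_n+4)\,2^{p_n}$ and its analysis of whether the $T$-orbit of $k$ meets $\{2^i : i \geq 0\}$.
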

	
	The proof of Theorem \ref{densityeigenvectors} relies on the following lemma:
		
	\begin{Lem}\label{sequences}
		Let 
		$
			f \in \mathcal X _ \omega,
		$
		with
		$
			f(z) =
			\sum _ {k = 3} ^ \infty
				c _ k z ^ k,
		$ 
		such that for every $n \geq 0$ and every $m \geq 1$
		\[
		\frac{
			c _ {(6m + 4)2 ^ n}
		}{
			\omega((6m + 4)2 ^ n)
		}
		=
		\frac{
			c _ {(2m + 1)2 ^ n}
		}{
			\omega((2m + 1)2 ^ n)
		}
		\quad
		\text{ and }
		\quad
		c _ {2 ^ {n + 2}} = 0.
		\]
		Then for every $k \geq 3$ such that $k \notin \{2 ^ i; i \geq 2\}$, there exist integer sequences $(m _ n) _ {n \geq 1}$, $(p _ n) _ {n \geq 1}$ and $(j _ n) _ {n \geq 1}$ depending on $k$ such that:
		\begin{enumerate}[(i)]
			\item \label{sequencesi}
			$
			c _ k =
			(
			\omega((2m _ 1 + 1)2 ^ {p _ 1})
			/
			\omega((6m _ n + 4)2 ^ {p _ n})
			)
			c _ {(6m _ n + 4)2 ^ {p _ n}}
			$
			for every $n \geq 1$;
			\item \label{sequencesii}
			$m _ n \geq 1$ for every $n \geq 1$;
			\item \label{sequencesiii}
			$3m _ n + 2 = T ^ {j _ n}(k)$ for every $n \geq 1$;
			\item \label{sequencesiv}
			$j _ {n + 1} > j _ n$ 
			if and only if 
			$3 m _ n + 2 \notin \{2 ^ i; i \geq 2\}$
			for every $n \geq 1$;
			\item \label{sequencesv}
			$(j _ n) _ {n \geq 1}$ and $((6m _ n + 4)2 ^ {p _ n}) _ {n \geq 1}$ are either both strictly increasing or both stationary.
		\end{enumerate}
	\end{Lem}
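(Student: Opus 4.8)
The plan is to generate the three sequences by chasing the forward $T$-orbit of $k$ and reading the required indices off the $2$-adic factorisations of the iterates; the hypotheses on $f$ then propagate the single normalised value $c_k/\omega(k)$ along a chain of ever-growing indices. The guiding observation is that a number $N\equiv 2\pmod 3$, written $N=3m+2$ with $m\geq 1$, has exactly two $T$-preimages, the even one $2N=6m+4$ and the odd one $(2N-1)/3=2m+1$, and that the first hypothesis on $f$ says precisely that the quantities $c_j/\omega(j)$ coincide at the two scaled preimages $(6m+4)2^p$ and $(2m+1)2^p$, for every $p\geq 0$. (The second hypothesis $c_{2^{n+2}}=0$ is not needed to build the sequences; it is what makes the stationary case useful in the density argument that follows.)

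I would first set up the recursion. Write $v_2(N)$ for the exponent of $2$ in $N$ and put $k_0=k$. Since $k\geq 3$ and $k\notin\{2^i;i\geq 2\}$, $k$ is not a power of $2$, so its odd part $q_0=k/2^{v_2(k)}$ is at least $3$; set $p_1=v_2(k)$ and $m_1=(q_0-1)/2\geq 1$, so that $(2m_1+1)2^{p_1}=k$. Given $k_n$, if $k_n\notin\{2^i;i\geq 2\}$ let $q_n\geq 3$ be its odd part and set $k_{n+1}=(3q_n+1)/2$, $m_{n+1}=(q_n-1)/2\geq 1$ and $p_{n+1}=p_n+1+v_2(k_n)$; if $k_n$ is a power of $2$, freeze the three sequences, declaring them constant from index $n$ onward. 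Halving $k_n$ down to its odd part $q_n$ and then applying one odd step shows $k_{n+1}=T^{v_2(k_n)+1}(k_n)$, so setting $j_1=v_2(k)+1$ and $j_{n+1}=j_n+v_2(k_n)+1$ gives $3m_n+2=k_n=T^{j_n}(k)$, which is (iii); (ii) is immediate since $q_n\geq 3$, and (iv) holds because $j_{n+1}-j_n=v_2(k_n)+1$ is positive exactly when $k_n$ is not a power of $2$. One also checks that $k_n\geq 5$ for $n\geq 1$, so the frozen value is a power of $2$ lying in $\{2^i;i\geq 2\}$.

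The crux is the elementary gluing identity $(6m_n+4)2^{p_n}=(2m_{n+1}+1)2^{p_{n+1}}$: the left-hand side equals $k_n2^{p_n+1}=q_n2^{v_2(k_n)+p_n+1}=q_n2^{p_{n+1}}$, and $q_n=2m_{n+1}+1$. I would then prove (i) by induction on $n$ in the equivalent normalised form $c_k/\omega(k)=c_{(6m_n+4)2^{p_n}}/\omega((6m_n+4)2^{p_n})$, which is equivalent to the stated identity because $(2m_1+1)2^{p_1}=k$ forces $\omega((2m_1+1)2^{p_1})=\omega(k)$. For $n=1$ it is the first hypothesis applied at $(m_1,p_1)$ together with $(2m_1+1)2^{p_1}=k$. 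For the inductive step, the gluing identity lets one read $(6m_n+4)2^{p_n}$ as $(2m_{n+1}+1)2^{p_{n+1}}$, and one further application of the hypothesis at $(m_{n+1},p_{n+1})$ transfers the common value to $(6m_{n+1}+4)2^{p_{n+1}}$. Finally (v) follows from the recursion: while the orbit avoids powers of $2$ one has $p_{n+1}>p_n$, $j_{n+1}>j_n$, and a direct computation gives $(6m_{n+1}+4)2^{p_{n+1}}=((3q_n+1)/q_n)\,(6m_n+4)2^{p_n}$ with $(3q_n+1)/q_n>3$, so this index is strictly increasing and tends to infinity; once a power of $2$ is met all three sequences are constant, so $(j_n)$ and $((6m_n+4)2^{p_n})$ are simultaneously strictly increasing or simultaneously eventually constant, which is alternative (v).

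The step I expect to demand the most care is checking that the recursion faithfully follows the $T$-orbit while the three index sequences stay mutually consistent, and in particular the gluing identity, which is the sole point where the even/odd preimage structure of the Collatz map meets the accumulation of $2$-adic valuations and which is what lets the coefficient hypothesis telescope. By contrast no input about the Collatz conjecture is required: the construction accommodates both an orbit that forever avoids powers of $2$ — the strictly increasing case, where $p_n\to\infty$ forces the indices to tend to infinity — and one that eventually meets a power of $2$, giving the stationary case.
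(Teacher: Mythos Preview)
Your proof is correct and follows essentially the same route as the paper's: both construct the sequences by writing $k=(2m_1+1)2^{p_1}$, then recursively factoring $3m_n+2$ as $(2m_{n+1}+1)2^{q_{n+1}}$ to obtain the gluing identity $(6m_n+4)2^{p_n}=(2m_{n+1}+1)2^{p_{n+1}}$ and freezing the sequences once a power of $2$ is hit. Your use of the auxiliary sequences $k_n=3m_n+2$ and $q_n=2m_{n+1}+1$ together with the $2$-adic valuation $v_2$ is a clean repackaging of the same recursion; the only minor slip is the sentence ``$j_{n+1}-j_n=v_2(k_n)+1$ is positive exactly when $k_n$ is not a power of $2$'', since that formula only applies in the non-frozen case --- but your freezing convention makes the intended dichotomy clear.
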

	\begin{proof}
		Since $k \geq 3$ and $k \notin \{2 ^ i; i \geq 2\}$, there exist $m _ 1 \geq 1$ and $p _ 1 \geq 0$ such that $k = (2m _ 1 + 1)2 ^ {p _ 1}$.
		Then 
		\[
		c _ k =
		c _ {(2m _ 1 + 1)2 ^ {p _ 1}} =
		\frac{
			\omega((2m _ 1 + 1)2 ^ {p _ 1})
		}{
			\omega((6m _ 1 + 4)2 ^ {p _ 1})
		}
		c _ {(6m _ 1 + 4)2 ^ {p _ 1}}
		\]
		and
		\[
		3 m _ 1 + 2 = 
		T(2m _ 1 + 1) =
		T ^ {p _ 1 + 1}((2m _ 1 + 1)2 ^ {p _ 1}) =
		T ^ {p _ 1 + 1}(k).
		\]
		We set then $j _ 1 = p _ 1 + 1$.
		
		Suppose now that $(m _ i) _ {1 \leq i \leq n}$, $(p _ i) _ {1 \leq i \leq n}$ and $(j _ i) _ {1 \leq i \leq n}$ have been defined so as to satisfy the following properties:
		\begin{enumerate}[(i)]
			\item
				$
					c _ k =
					(\omega((2m _ 1 + 1) 2 ^ {p _ 1}) / \omega((6m _ i + 4)2 ^ {p _ i}))c _ {(6m _ i + 4) 2 ^ {p _ i}}
				$
				for every $1 \leq i \leq n$;
			\item
				$
					m _ i \geq 1
				$
				for every $1 \leq i \leq n$;
			\item
				$
					3m _ i + 2 = T ^ {j _ i}(k)
				$
				for every $1 \leq i \leq n$;
			\item
				$
					j _ {i + 1} > j _ i
				$
				if and only if 
				$
					3m _ i + 2 \notin 
					\{
						2 ^ p; p \geq 2
					\}
				$
				for every $1 \leq i \leq n - 1$.
		\end{enumerate}
		Let us construct $m _ {n + 1}$, $p _ {n + 1}$ and $j _ {n + 1}$.
		
		If $3 m _ n + 2 \in \{2 ^ i ; i \geq 2\}$, then we set $m _ {n + 1} = m _ n$, $p _ {n + 1} = p _ n$ and $j _ {n + 1} = j _ n$.
		Otherwise there exist $m _ {n + 1} \geq 1$ and $q _ {n + 1} \geq 0$ such that 
		$
		3 m _ n + 2 =
		(2 m _ {n + 1} + 1)2 ^ {q _ {n + 1}}.
		$
		So 
		\[
		(6m _ n + 4)2 ^ {p _ n} = 
		(3 m _ n + 2)2 ^ {p _ n + 1} =
		(2 m _ {n + 1} + 1)2 ^ {q _ {n + 1} + p _ n + 1}.
		\]
		We set then $p _ {n + 1} = q _ {n + 1} + p _ n + 1 > p _ n$.
		Moreover 
		\begin{align*}
		c _ k 
		& =
		\frac{
			\omega((2m _ 1 + 1)2 ^ {p _ 1})
		}{
			\omega((2 m _ {n + 1} + 1)2 ^ {p _ {n + 1}})
		}
		c _ {(2m _ {n + 1} + 1)2 ^ {p _ {n + 1}}} \\
		& =
		\frac{
			\omega((2m _ 1 + 1)2 ^ {p _ 1})
		}{
			\omega((2m _ {n + 1} + 1)2 ^ {p _ {n + 1}})
		}
		\frac{
			\omega((2m _ {n + 1} + 1)2 ^ {p _ {n + 1}})
		}{
			\omega((6m _ {n + 1} + 4)2 ^ {p _ {n + 1}})
		}
		c _ {(6m _ {n + 1} + 4)2 ^ {p _ {n + 1}}} \\
		& =
		\frac{
			\omega((2m _ 1 + 1)2 ^ {p _ 1})
		}{
			\omega((6m _ {n + 1} + 4)2 ^ {p _ {n + 1}})
		}
		c _ {(6m _ {n + 1} + 4)2 ^ {p _ {n + 1}}}.
		\end{align*}
		Remark that 
		\[
		3m _ {n + 1} + 2 
		=
		T ^ {p _ {n + 1} + 1}(
		(2m _ {n + 1} + 1)2 ^ {p _ {n + 1}}
		)
		=
		T ^ {p _ {n + 1} + 1}(
		(6m _ n + 4)2 ^ {p _ n}
		)
		=
		T ^ {p _ {n + 1}}(
		(3 m _ n + 2)2 ^ {p _ n}
		).
		\]
		So
		$
		3m _ {n + 1} + 2 =
		T ^ {p _ {n + 1} - p _ n}(3m _ n + 2) =
		T ^ {p _ {n + 1} - p _ n + j _ n}(k)
		$
		and we set $j _ {n + 1} = p _ {n + 1} - p _ n + j _ n > j _ n$.
		We also have
		$
		(6m _ {n + 1} + 4)2 ^ {p _ {n + 1}} >
		(2m _ {n + 1} + 1)2 ^ {p _ {n + 1}} =
		(6m _ n + 4)2 ^ {p _ n}.
		$
		
		Defined this way, the sequences $(m _ n) _ {n \geq 1}$, $(p _ n) _ {n \geq 1}$ and $(j _ n) _ {n \geq 1}$ satisfy the first four properties and it remains to prove the last one.
		On the one hand, if there exists $n _ 0 \geq 1$ such that 
		$	
		3m _ {n _ 0} + 2 
		\in \{2 ^ i; i \geq 2\}
		$, 
		then the sequences $(j _ n) _ {n \geq 1}$ and $((6m _ n + 4)2 ^ {p _ n}) _ {n \geq 1}$ are stationary by construction. 
		On the other hand if 
		$
		3m _ n + 2 
		\notin \{2 ^ i; i \geq 2\}
		$
		for every $n \geq 1$, then the sequences $(j _ n) _ {n \geq 1}$ and $((6m _ n + 4)2 ^ {p _ n}) _ {n \geq 1}$ are stricly increasing by construction, which proves that the fifth property is satisfied.
	\end{proof}

	\begin{proof}[Proof of Theorem \ref{densityeigenvectors}]
		By Proposition \ref{eigenvectors}, $h _ m(\mu, \cdot)$ belongs to $\mathcal X _ \omega$ for every $m \geq 0$ and every $\mu \in \mathbf D$ since
		$
			\omega
		$
		is bounded from below.
		Let 
		$
			f \in 
			\text{span}[
				h _ m(\mu, \cdot) ;
				m \geq 0, \mu \in \mathbf D
			] ^ \perp
		$
		in $\mathcal X _ \omega$
		with
		$
			f(z) =
			\sum _ {k = 3} ^ \infty
				c _ k z ^ k
		$.
		Our aim is to show that $f = 0$.
		For every $\mu \in \mathbf D$ and every $m \geq 1$
		\begin{align*}
		\varphi _ m (\mu) \colon\! =
		\pdtsca{h _ m(\mu, \cdot), f} =
		\sum _ {n = 0} ^ \infty
		\left(
		\frac{
			\adhe{c _ {(6m + 4) 2 ^ n}}
		}{
			\omega((6m + 4) 2 ^ n)
		}
		- \frac{
			\adhe{c _ {(2m + 1)2 ^ n}}
		}{
			\omega((2m + 1)2 ^ n)
		}
		\right)
		\mu ^ n
		= 0,
		\end{align*}
		\begin{align*}
		\varphi _ 0(\mu) \colon \!=
		\pdtsca{h _ 0(\mu, \cdot), f} =
		\sum _ {n = 0} ^ \infty
		\frac{
			\adhe{c _ {2 ^ {n + 2}}}
		}{
			\omega(2 ^ {n + 2})
		} 
		\mu ^ n
		=0.
		\end{align*}
		For every $k \geq 3$ the radius of convergence of the power series 
		$
		\sum _ {n \geq 0}
		(\adhe{c _ {k2 ^ n}} / \omega(k2 ^ n))z ^ n
		$
		is greater than 1.
		Indeed by the Cauchy-Schwarz's inequality 
		\[
			\left(\sum _ {n = 0} ^ \infty
				\abs{
					\frac{
						\adhe{c _ {k2 ^ n}}
					}{
						\omega(k2 ^ n)
					}
					z ^ n
				}
			\right) ^ 2
			\leq
			\sum _ {n = 0} ^ \infty
				\frac{
					\abs{c _ {k2 ^ n}} ^ 2
				}{
					\omega(k2 ^ n)
				}
			\sum _ {n = 0} ^ \infty
				\frac{
					\abs z ^ {2n}
				}{
					\omega(k2 ^ n)
				}
			< + \infty
		\]
		for every $z \in \mathbf D$ because $f$ belongs to $\mathcal X _ \omega$ and $\omega$ is supposed to be bounded from below.
		Thus the functions $\varphi _ m$ and $\varphi _ 0$ are holomorphic and they identically vanish on $\mathbf D$ for every $m \geq 1$.
		So for every $m \geq 1$
		\[
			\frac{
				c _ {(6m + 4) 2 ^ n}
			}{
				\omega((6m + 4)2 ^ n)
			}
			=
			\frac{
				c _ {(2m + 1)2 ^ n}
			}{
				\omega((2m + 1)2 ^ n)
			}
			\quad
			\text{ and }
			\quad
			c _ {2 ^ {n + 2}} = 0
			\quad
			\text{for every $n \geq 0$.}
		\]
		\begin{Clm}\label{claim}
			Let $\omega$ be a weight on $\mathbf Z _ +$ which is bounded from below, and let $f \in \mathcal X _ \omega$ with
			$
				f(z) =
				\sum _ {k = 3} ^ \infty
					c _ k z ^ k
			$.
			Suppose that for every $m \geq 1$
			\[
				\frac{
					c _ {(6m + 4) 2 ^ n}
				}{
					\omega((6m + 4)2 ^ n)
				}
				=
				\frac{
					c _ {(2m + 1)2 ^ n}
				}{
					\omega((2m + 1)2 ^ n)
				}
				\quad
				\text{ and }
				\quad
				c _ {2 ^ {n + 2}} = 0
				\quad
				\text{for every $n \geq 0$,}
			\]
			Then $c _ k = 0$ for every $k \geq 3$.
		\end{Clm}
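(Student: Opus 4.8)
The plan is to reduce the claim entirely to Lemma \ref{sequences}, treating separately the dyadic indices and the remaining ones. First I would dispose of the indices $k \in \{2 ^ i; i \geq 2\}$: these are exactly the $k = 2 ^ {n + 2}$ with $n \geq 0$, for which the hypothesis gives $c _ k = 0$ at once. So from now on I would fix $k \geq 3$ with $k \notin \{2 ^ i; i \geq 2\}$ and feed it into Lemma \ref{sequences} to obtain the integer sequences $(m _ n) _ {n \geq 1}$, $(p _ n) _ {n \geq 1}$ and $(j _ n) _ {n \geq 1}$ enjoying properties (i)--(v). The whole argument then hinges on the dichotomy of property (v), and I would split into the two cases according to whether these sequences are stationary or strictly increasing.

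In the stationary case, property (v) together with (iv) forces the existence of some index $n _ 0$ with $3 m _ {n _ 0} + 2 \in \{2 ^ i; i \geq 2\}$. Writing $3 m _ {n _ 0} + 2 = 2 ^ i$, the integer $(6 m _ {n _ 0} + 4) 2 ^ {p _ {n _ 0}} = (3 m _ {n _ 0} + 2) 2 ^ {p _ {n _ 0} + 1} = 2 ^ {i + p _ {n _ 0} + 1}$ is a power of $2$ with exponent at least $2$, so $c _ {(6 m _ {n _ 0} + 4) 2 ^ {p _ {n _ 0}}} = 0$ by hypothesis, and property (i) then immediately yields $c _ k = 0$.

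In the strictly increasing case, I would exploit the membership $f \in \mathcal X _ \omega$. Using property (i) and the identity $(2 m _ 1 + 1) 2 ^ {p _ 1} = k$ given by the first step of the lemma, I get, for every $n \geq 1$,
\[
	\abs{c _ k} ^ 2
	=
	\frac{\omega(k) ^ 2}{\omega((6 m _ n + 4) 2 ^ {p _ n}) ^ 2}
	\abs{c _ {(6 m _ n + 4) 2 ^ {p _ n}}} ^ 2
	\leq
	\frac{\omega(k) ^ 2}{\delta}
	\frac{\abs{c _ {(6 m _ n + 4) 2 ^ {p _ n}}} ^ 2}{\omega((6 m _ n + 4) 2 ^ {p _ n})},
\]
where $\delta > 0$ is a lower bound for $\omega$. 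Since $f \in \mathcal X _ \omega$, the terms $\abs{c _ \ell} ^ 2 / \omega(\ell)$ are summable and hence tend to $0$ as $\ell \to + \infty$; as property (v) guarantees that $((6 m _ n + 4) 2 ^ {p _ n}) _ {n \geq 1}$ is strictly increasing, the right-hand side above tends to $0$ as $n \to + \infty$, which forces $c _ k = 0$.

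The main obstacle has, in effect, already been absorbed into Lemma \ref{sequences}, which produces the recursive chains of indices; what remains is the routine but decisive observation that the two regimes of property (v) are annihilated by two different features of the hypotheses, namely the vanishing of the coefficients on dyadic indices in the stationary case, and the summability of $\|f\| _ \omega ^ 2$ combined with the lower bound on $\omega$ in the strictly increasing case.
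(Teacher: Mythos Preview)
Your proof is correct and follows essentially the same approach as the paper, relying on Lemma \ref{sequences} and the dichotomy of property (v). Your case split directly on (v) is slightly cleaner than the paper's split according to whether the $T$-orbit of $k$ meets $\{2^i; i\geq 0\}$, and in the strictly increasing case you argue via terms tending to zero rather than via series divergence, but these are only cosmetic variations.
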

		\begin{proof}
			Fix $k \geq 3$.
			If $k \in \{2 ^ i; i \geq 2\}$ then $c _ k = 0$.
			Otherwise consider the sequences $(m _ n) _ {n \geq 1}$, $(p _ n) _ {n \geq 1}$ and $(j _ n) _ {n \geq 1}$ associated to $k$ given by Lemma \ref{sequences}.
			
			On the one hand suppose that
			$
			\{T ^ n(k) ; n \geq 0\}
			\cap
			\{2 ^ i ; i \geq 0\} = \emptyset
			$,
			then in particular
			$
			3m _ n + 2 \notin 
			\{2 ^ i ; i \geq 2\}
			$
			for every $n \geq 1$ 
			by (\ref{sequencesiii}).
			So by (\ref{sequencesiv}), $(j _ n) _ {n \geq 1}$ is strictly increasing and then so is $((6m _ n + 4)2 ^ {p _ n}) _ {n \geq 1}$ by (\ref{sequencesv}).
			Therefore by (\ref{sequencesi})
			\begin{align*}
			\|f \| _ \omega ^ 2 
			=
			\sum _ {j = 3} ^ \infty
			\frac{
				\abs{c _ j} ^ 2
			}{
				\omega(j)
			}
			\geq 
			\sum _ {n = 1} ^ \infty
			\frac{
				\abs{
					c _ {(6 m _ n + 4)2 ^ {p _ n}}
				} ^ 2
			}{
				\omega((6m _ n + 4)2 ^ {p _ n})
			}
			=
			\sum _ {n = 1} ^ \infty
			\frac{
				\omega((6m _ n + 4)2 ^ {p _ n})
			}{
				\omega((2m _ 1 + 1)2 ^ {p _ 1}) ^ 2
			}
			\abs{c _ k} ^ 2.
			\end{align*}
			Since $\omega$ is bounded from below, the previous series can converge only in the case where $c _ k = 0$.
			
			On the other hand suppose that 
			$
			\{T ^ n(k) ; n \geq 0\}
			\cap
			\{2 ^ i; i \geq 0\} 
			\ne \emptyset
			$, then $(j _ n) _ {n \geq 1}$ is stationary.
			Indeed suppose that $(j _ n) _ {n \geq 1}$ is not stationary, that is to say strictly increasing by (\ref{sequencesv}). 
			There exists $N \geq 1$ such that 
			$
			T ^ N(k) \in \{2 ^ i ; i \geq 0\}
			$, 
			so 
			$
			T ^ n(k) \in \{2 ^ i ; i \geq 0\}
			$
			for every $n \geq N$ and 
			$
			T ^ {j _ N}(k) \in \{2 ^ i ; i \geq 0\}
			$ 
			since $j _ N \geq N$.
			Then 
			$
			T ^ {j _ N}(k) 
			=
			3m _ N + 2 
			\in 
			\{2 ^ i ; i \geq 2\}
			$ 
			by (\ref{sequencesii}) and (\ref{sequencesiii}), which implies that $j _ {N + 1} = j _ N$. This is impossible, so $(j _ n) _ {n \geq 1}$ is indeed stationary. 
			Thus there exists $N \geq 1$ such that 
			$
			3m _ {N} + 2 \in 
			\{2 ^ i ; i \geq 2\}
			$
			by (\ref{sequencesiv}),
			which implies by (\ref{sequencesi}) that 
			\[
			c _ k = 
			\frac{
				\omega((2m _ 1 + 1)2 ^ {p _ 1})
			}{
				\omega((6m _ N + 4)2 ^ {p _ N})
			}
			c _ {(6m _ N + 4)2 ^ {p _ N}}
			= 0
			\]
			because 
			$
			(6 m _ N + 4)2 ^ {p _ N} =
			(3m _ N + 2)2 ^ {p _ N + 1}
			\in \{2 ^ i ; i \geq 2\}
			$.
			
			We have shown that $c _ k = 0$ for every $k \geq 3$.
		\end{proof}
		Thus Claim \ref{claim} gives that $f = 0$, which concludes the proof of Theorem \ref{densityeigenvectors}.
	\end{proof}

	We conclude this section by proving that the adjoint $\mathcal T ^ *$ of $\mathcal T$ acting on $\mathcal X _ \omega$ does not admit any eigenvalue.
	We will denote by $\sigma _ p(\mathcal T ^ *)$ the set of its eigenvalues.
	
	\begin{Thm}\label{noeigenvalues}
		If $\omega$ is bounded from below then $\sigma _ p(\mathcal T ^ *) = \emptyset$.
		
		In particular if $\omega = \omega _ 0$ then $\sigma _ p(\mathcal T ^ *) = \emptyset$.
	\end{Thm}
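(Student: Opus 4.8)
The plan is to exploit the duality between the eigenvectors of $\mathcal T$ constructed in Proposition \ref{eigenvectors} and the kernel of $\mathcal T ^ * - \lambda I$, and then to invoke the density result of Theorem \ref{densityeigenvectors}. First I would fix $\lambda \in \mathbf C$ and suppose, for contradiction, that $g \in \mathcal X _ \omega$ is a non-zero vector with $\mathcal T ^ * g = \lambda g$; the goal is to show $g = 0$. Since $\omega$ is bounded from below, Proposition \ref{eigenvectors} guarantees that $h _ m(\mu, \cdot) \in \mathcal X _ \omega$ and $\mathcal T h _ m(\mu, \cdot) = \mu h _ m(\mu, \cdot)$ for every $m \geq 0$ and every $\mu \in \mathbf D$. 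Using the definition of the adjoint, I compute for each such $m$ and $\mu$
\[
	\mu \pdtsca{h _ m(\mu, \cdot), g}
	=
	\pdtsca{\mathcal T h _ m(\mu, \cdot), g}
	=
	\pdtsca{h _ m(\mu, \cdot), \mathcal T ^ * g}
	=
	\adhe\lambda \, \pdtsca{h _ m(\mu, \cdot), g},
\]
so that $(\mu - \adhe\lambda) \pdtsca{h _ m(\mu, \cdot), g} = 0$.

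Next I would observe that the functions $\varphi _ m(\mu) := \pdtsca{h _ m(\mu, \cdot), g}$ are precisely the power series studied in the proof of Theorem \ref{densityeigenvectors}, and are therefore holomorphic on $\mathbf D$ (the Cauchy--Schwarz estimate there shows the radius of convergence exceeds $1$ because $\omega$ is bounded from below). The identity above shows that $\varphi _ m(\mu) = 0$ for every $\mu \in \mathbf D$ with $\mu \neq \adhe\lambda$. As $\mathbf D \setminus \{\adhe\lambda\}$ has accumulation points in $\mathbf D$, analytic continuation forces $\varphi _ m \equiv 0$ on all of $\mathbf D$; in particular $\pdtsca{h _ m(\mu, \cdot), g} = 0$ for every $m \geq 0$ and every $\mu \in \mathbf D$, including the excluded value $\mu = \adhe\lambda$.

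Finally, this means $g$ is orthogonal to $\text{span}[h _ m(\mu, \cdot); m \geq 0, \mu \in \mathbf D]$, which is dense in $\mathcal X _ \omega$ by Theorem \ref{densityeigenvectors}. Hence $g = 0$, contradicting the assumption, so $\lambda$ is not an eigenvalue of $\mathcal T ^ *$; since $\lambda$ was arbitrary, $\sigma _ p(\mathcal T ^ *) = \emptyset$. The particular case $\omega = \omega _ 0$ then follows immediately since $\omega _ 0$ is bounded from below. I do not expect a serious obstacle here: the substantial work has already been carried out in Theorem \ref{densityeigenvectors}, and the only delicate point is the analytic continuation across the single value $\mu = \adhe\lambda$, which is handled by the holomorphy of the $\varphi _ m$ established in that proof.
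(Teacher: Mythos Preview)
Your proof is correct and takes a genuinely different, more economical route than the paper's.

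The paper proceeds by brute force: it computes $\mathcal T ^ *$ explicitly, writes out the system of coefficient equations for a putative eigenvector, states a separate combinatorial lemma (Lemma \ref{sequences2}, parallel to Lemma \ref{sequences}), and then argues case by case ($\mu = 0$, $0 < \abs\mu \leq 1$, $\abs\mu > 1$) that each coefficient $c_k$ must vanish, using the lower bound on $\omega$ to force the divergence of appropriate subseries of $\|f\|_\omega^2$ unless $c_k = 0$.

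You instead recycle Theorem \ref{densityeigenvectors} directly via the standard orthogonality between eigenvectors of an operator and of its adjoint: $(\mu - \adhe\lambda)\pdtsca{h_m(\mu,\cdot), g} = 0$ for all $\mu \in \mathbf D$, and the holomorphy of $\varphi_m$ on $\mathbf D$ (already established inside the proof of Theorem \ref{densityeigenvectors}) kills the isolated exception $\mu = \adhe\lambda$. Then density gives $g = 0$. This is a clean two-line argument once Theorem \ref{densityeigenvectors} is in hand, and it makes transparent that Theorem \ref{noeigenvalues} is a formal corollary of the spanning result together with the analyticity of the eigenvectorfields, requiring no new combinatorics. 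The paper's approach, on the other hand, yields as a by-product the explicit formula for $\mathcal T ^ *$, which may be of independent use.
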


	The proof proceeds along the same lines as the proof of Theorem \ref{densityeigenvectors}.
	We first need the following Lemma, whose proof is similar to the proof of Lemma \ref{sequences} and will be omitted here.
	
	\begin{Lem}\label{sequences2}
		Let $\mu \in \mathbf C \backslash \{0\}$ and let
		$
		f \in \mathcal X _ \omega
		$
		with 
		$
			f(z) = 
			\sum _ {k = 3} ^ \infty
				c _ k z ^ k
			\in \mathcal X _ \omega
		$.
		Suppose that for every $m \geq 1$ and every $n\geq 0$
		\[
		\frac{
			\mu ^ {-1}
			c _ {(3m + 2)2 ^ n}
		}{
			\omega((3m + 2)2 ^ n)
		}
		=
		\frac{
			c _ {(2m + 1)2 ^ n}
		}{
			\omega((2m + 1)2^ n)
		}
		\quad
		\text{and}
		\quad
		c _ {2 ^ {n + 2}} = 0.
		\]
		Then for every $k \geq 3$ such that $k \notin \{2 ^ i; i \geq 2\}$, there exist integer sequences $(m _ n) _ {n \geq 1}$, $(p _ n) _ {n \geq 1}$ and $(j _ n) _ {n \geq 1}$ such that:
		\begin{enumerate}[(i)]
			\item \label{sequences2i}
			$
			c _ k =
			(
			\mu ^ {-n}\omega((2 m _ 1 + 1)2 ^ {p _ 1}) /
			\omega((3m _ n + 2)2 ^ {p _ n})
			)
			c _ {(3m _ n + 2)2 ^ {p _ n}}
			$
			for every $n \geq 1$;
			\item \label{sequences2ii}
			$m _ n \geq 1$ for every $n \geq 1$;
			\item \label{sequences2iii}
			$3m _ n + 2 = T ^ {j _ n}(k)$ for every $n \geq 1$;
			\item \label{sequences2iv}
			$j _ {n + 1} > j _ n$ if and only if 
			$3 m _ n + 2 \notin \{2 ^ i; i \geq 2\}$
			for every $n \geq 1$;
			\item \label{sequences2v}
			$(j _ n) _ {n \geq 1}$ and $((3m _  n + 2)2 ^ {p _ n}) _ {n \geq 1}$ are either both strictly increasing or both stationary.
		\end{enumerate}
	\end{Lem}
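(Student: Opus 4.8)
The plan is to construct the three sequences recursively, following verbatim the scheme of Lemma \ref{sequences}, with exactly two bookkeeping modifications forced by the twisted hypothesis: here the relation links $c_{(3m+2)2^n}$ to $c_{(2m+1)2^n}$ (rather than $c_{(6m+4)2^n}$ to $c_{(2m+1)2^n}$), and each application of it produces a factor $\mu^{-1}$.

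First I would settle the base case. Since $k\geq 3$ and $k\notin\{2^i;i\geq 2\}$, I write $k=(2m_1+1)2^{p_1}$ with $m_1\geq 1$ and $p_1\geq 0$. Applying the hypothesis with $m=m_1$ and exponent $p_1$ gives
\[
c_k=\frac{\mu^{-1}\,\omega((2m_1+1)2^{p_1})}{\omega((3m_1+2)2^{p_1})}\,c_{(3m_1+2)2^{p_1}},
\]
which is (\ref{sequences2i}) for $n=1$ (the exponent $\mu^{-n}$ reading $\mu^{-1}$), while $3m_1+2=T(2m_1+1)=T^{p_1+1}(k)$, so I set $j_1=p_1+1$.

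Then, in the inductive step, I would assume the first four properties up to index $n$. If $3m_n+2\in\{2^i;i\geq 2\}$ I freeze the sequences, $m_{n+1}=m_n$, $p_{n+1}=p_n$, $j_{n+1}=j_n$. Otherwise the odd part of $3m_n+2$ is at least $3$, so I may write $3m_n+2=(2m_{n+1}+1)2^{q_{n+1}}$ with $m_{n+1}\geq 1$ and $q_{n+1}\geq 0$, and set $p_{n+1}=p_n+q_{n+1}$, which makes $(2m_{n+1}+1)2^{p_{n+1}}=(3m_n+2)2^{p_n}$. Applying the hypothesis once more to $c_{(2m_{n+1}+1)2^{p_{n+1}}}$ and inserting this into the $n$-th instance of (\ref{sequences2i}), the two $\omega$-values at the common index cancel and exactly one further factor $\mu^{-1}$ is generated, yielding (\ref{sequences2i}) for $n+1$ with exponent $\mu^{-(n+1)}$. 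For the iterate count I would use that $T^{p_n}$ collapses the dyadic tail, $T^{p_n}((3m_n+2)2^{p_n})=3m_n+2$ (each of the $p_n$ steps halves an even number); together with $3m_{n+1}+2=T^{p_{n+1}+1}((3m_n+2)2^{p_n})$ and (\ref{sequences2iii}) this gives $3m_{n+1}+2=T^{j_{n+1}}(k)$ with $j_{n+1}=j_n+(p_{n+1}-p_n)+1=j_n+q_{n+1}+1>j_n$, so (\ref{sequences2ii})--(\ref{sequences2iv}) hold.

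The main obstacle, and the only genuine departure from Lemma \ref{sequences}, will be the $\mu$-power bookkeeping in (\ref{sequences2i}): I must verify that each pass through the recursion invokes the twisted relation \emph{exactly once}, so that the exponent of $\mu^{-1}$ stays synchronized with the index $n$; no such factor is present in Lemma \ref{sequences}. Once this is checked, property (\ref{sequences2v}) is obtained exactly as before, since in the non-stationary regime $3m_{n+1}+2>2m_{n+1}+1$ forces $(3m_{n+1}+2)2^{p_{n+1}}>(2m_{n+1}+1)2^{p_{n+1}}=(3m_n+2)2^{p_n}$, so that $((3m_n+2)2^{p_n})_{n\geq 1}$ is strictly increasing exactly when $(j_n)_{n\geq 1}$ is, while both become stationary as soon as some $3m_{n_0}+2$ is a power of two, which is precisely the dichotomy (\ref{sequences2v}).
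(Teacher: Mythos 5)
Your overall strategy is exactly the intended one: the paper omits this proof, stating only that it is similar to the proof of Lemma \ref{sequences}, and your adaptation gets the two genuinely modified pieces of bookkeeping right in the main branch of the recursion. The choice $p_{n+1}=p_n+q_{n+1}$ (so that $(2m_{n+1}+1)2^{p_{n+1}}=(3m_n+2)2^{p_n}$, correctly replacing the shift $p_{n+1}=q_{n+1}+p_n+1$ used in Lemma \ref{sequences}, where the tracked index is $(6m_n+4)2^{p_n}=(3m_n+2)2^{p_n+1}$), the iterate count $j_{n+1}=j_n+q_{n+1}+1$, and the cancellation of the two $\omega$-values at the common index producing one factor $\mu^{-1}$ per step are all correct, as is your argument for (\ref{sequences2v}).

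There is, however, one step where your stated justification fails: the frozen branch. When $3m_n+2\in\{2^i;i\geq 2\}$ you set $m_{n+1}=m_n$, $p_{n+1}=p_n$, $j_{n+1}=j_n$, and your synchronization principle --- ``each pass through the recursion invokes the twisted relation exactly once'' --- is false there: no relation is invoked at all, yet (\ref{sequences2i}) at rank $n+1$ requires the exponent $\mu^{-(n+1)}$ while rank $n$ gives $\mu^{-n}$, with every other quantity unchanged. These two equalities are incompatible unless the coefficient in question vanishes. The gap is closed by one observation: in this branch $(3m_n+2)2^{p_n}$ is itself a power of $2$ with exponent at least $2$, so the hypothesis $c_{2^{i}}=0$ for $i\geq 2$ gives $c_{(3m_n+2)2^{p_n}}=0$; hence $c_k=0$ by (\ref{sequences2i}) at rank $n$, and (\ref{sequences2i}) at rank $n+1$ holds trivially with both sides equal to $0$. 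This subtlety is invisible in Lemma \ref{sequences}, where the frozen step preserves property (i) verbatim because there is no $\mu$; it is precisely the point where the transfer you announce as ``verbatim'' needs an extra line, and it is also the vanishing that the paper later exploits when applying the lemma in the proof of Theorem \ref{noeigenvalues}.
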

	\begin{proof}[Proof of Theorem \ref{noeigenvalues}]
		For every 
		$
		f, g
		\in \mathcal X _ \omega
		$
		with
		$
			f(z) = 
			\sum _ {n = 3} ^ \infty
				c _ n(f) z ^ n
		$
		and with
		$
			g(z) =
			\sum _ {n = 3} ^ \infty
				c _ n(g) z ^ n
		$,
		we have
		\begin{align*}
		\pdtsca{
			\mathcal T f
			\mid
			g
		}
		=
		\sum _ {n = 3} ^ \infty
		\frac{
			c _ n(\mathcal Tf) 
			\adhe{c _ n(g)}
		}{
			\omega(n)
		} 
		=
		\sum _ {n = 3} ^ \infty
		\frac{
			\sum _ {T(j) = n}
			c _ j(f)
			\adhe{c _ n(g)}
		}{
			\omega(n)
		} 
		=
		\sum _ {j \in \{k \geq 3; T(k) \geq 3\}}
		\frac{
			c _ j(f)
			\adhe{c _ {T(j)}(g)}
		}{
			\omega(T(j))
		},
		\end{align*}
		so
		\begin{align*}
		\pdtsca{\mathcal T f \mid g}
		=
		\frac{
			c _ 3(f)
			\adhe{c _ 5(g)}
		}{
			\omega(5)
		}
		+
		\sum _ {k = 5} ^ \infty
		\frac{
			c _ k(f)
			\adhe{c _ {T(k)}(g)}
		}{
			\omega(T(k))
		}.
		\end{align*}
		Then $\mathcal T ^ *$ is defined in the following way:
		\[
			\mathcal T ^ * 
			\sum _ {n = 3} ^ \infty
				c _ n z ^ n
			=
			\frac{
				\omega(3)
			}{
				\omega(5)
			}
			c _ 5
			z ^ 3
			+
			\sum _ {k = 5} ^ \infty
				\frac{
					\omega(k)
				}{
					\omega(T(k))
				}
				c _ {T(k)}
				z ^ k
			\quad
			\text{for every }
			f\colon z \mapsto \sum _ {n = 3} ^ \infty
			c _ n z ^ n
			\in \mathcal X _ \omega.
		\]
		Let $\mu \in \mathbf C$, let $f \in \mathcal X _ \omega$ with 
		$
			f(z) =
			\sum _ {n = 3} ^ \infty
				c _ n z ^ n
		$
		and suppose that $\mathcal T ^ * f = \mu f$.
		We have
		\begin{align*}
		\accolade{l l}{
			(\omega(3) / \omega(5))
			c _ 5 =
			\mu c _ 3 \\
			(\omega(k) / \omega(T(k)))
			c _ {T(k)}
			=
			\mu c _ k,
			&
			k \geq 5\\
			0 = \mu c _ 4,
			&
		}
		\end{align*}
		which is equivalent to
		\begin{align*}
		\accolade{l l}{
			(\omega(2m) / \omega(m)) c _ m
			=
			\mu c _ {2m},
			& 
			m \geq 3 \\
			(\omega(2m + 1) / \omega(3m + 2))c _ {3m + 2} 
			= 
			\mu c _ {2m + 1}, 
			& 
			m \geq 1 \\
			0 = \mu c _ 4.
		}	
		\end{align*}
		Firstly if $\mu = 0$ then $c _ m = 0$ for every $m \geq 3$. 
		So $0$ does not belong to $\sigma _ p(\mathcal T ^ *)$.
		
		We suppose now that $\mu \ne 0$.
		Then we have $\mathcal T ^ * f = \mu f$ if and only if
		\begin{align*}
		\accolade{l l}{
			c _ {m2 ^ n} = \mu ^ {-n} (\omega(m2 ^ n) / \omega(m)) c _  m,
			&
			m \geq 3,
			n \geq 0\\
			\mu ^ {-1}c _ {(3m + 2)2 ^ n} / \omega((3m + 2)2 ^ n)
			=
			c _ {(2m + 1)2 ^ n} / \omega((2m + 1)2 ^ n),
			&
			m \geq 1, 
			n \geq 0 \\
			c _ {2 ^ {n + 2}} = 0,
			&
			n \geq 0
		}
		\end{align*}
		because for every $m \geq 1$ and every $n \geq 0$
		\begin{align*}
		c _ {(2m + 1)2 ^ n} 
		= 
		\frac{
			\mu ^ {-1}
			\omega((2m + 1)2 ^ n)
		}{
			\omega((2m + 1)2 ^ {n - 1})
		}
		c _ {(2m + 1)2 ^ {n - 1}} 
		=
		\ldots 
		=
		\frac{
			\mu ^ {-n}
			\omega((2m + 1)2 ^ n)
		}{
			\omega(2m + 1)
		}
		c _ {2m + 1} ,
		\end{align*}
		so 
		\begin{align*}
		c _ {(2m + 1)2 ^ n}
		=
		\frac{
			\mu ^ {-n - 1}
			\omega((2m + 1)2 ^ n)
		}{
			\omega(3m + 2)
		}
		c _ {3m + 2}
		=
		\ldots 
		=
		\frac{
			\mu ^ {-n - 1 + n}
			\omega((2m + 1)2 ^ n)
		}{
			\omega((3m + 2)2 ^ n)
		}
		c _ {(3m + 2)2 ^ n}.		
		\end{align*}
		Fix $k \geq 3$.
		If $k \in \{2 ^ i ; i \geq 2\}$ then $c _ k = 0$.
		Suppose now that 
		$
			k \notin \{2 ^ i; i \geq 2\}
		$.
		If $\abs\mu \leq 1$, we can remark that 
		\[
			\|f \| _ \omega ^ 2 =
			\sum _ {j = 3} ^ \infty
			\frac{
				\abs{c _ j} ^ 2
			}{
				\omega(j)
			}
			\geq
			\sum _ {n = 0} ^ \infty
			\frac{
				\abs{c _ {k2 ^ n}} ^ 2
			}{
				\omega(k2 ^ n)
			}
			=
			\sum _ {n = 0} ^ \infty
			\frac{
				\abs{\mu} ^ {-2n}
				\omega(k2 ^ n)
			}{
				\omega(k) ^ 2
			}
			\abs{c _ k} ^ 2.
		\]
		Since $\omega$ is bounded from below, the previous series can converge only in the case where $c _ k = 0$ and it follows that $f= 0$.
		Thus $\mu$ does not belong to $\sigma _ p(\mathcal T ^ *)$.
		If $\abs\mu > 1$, consider the sequences $(m _ n) _ {n \geq 1}$, $(p _ n) _ {n \geq 1}$ and $(j _ n) _ {n \geq 1}$ associated to $k$ given by Lemma \ref{sequences2}.
		
		On the one hand suppose that
		$	
		\{
		T ^ n(k); n \geq 0
		\} \cap
		\{
		2 ^ i ; i \geq 0
		\}
		= \emptyset
		$,
		then in particular $3m _ n + 2 \notin \{2 ^ i; i \geq 2\}$ for every $n \geq 1$
		by (\ref{sequences2iii}).
		So by (\ref{sequences2iv}), $(j _ n) _ {n \geq 1}$ is strictly increasing and then so is $((3m _ n + 2)2 ^ {p _ n}) _ {n \geq 1}$ by (\ref{sequences2v}).
		Therefore by (\ref{sequences2i})
		\[
		\|f \| _ \omega ^ 2 = 
		\sum _ {j = 3} ^ \infty
		\frac{
			\abs{c _ j} ^ 2
		}{
			\omega(j)
		}
		\geq
		\sum _ {n = 1} ^ \infty
		\frac{
			\abs{c _ {(3m _ n + 2)2 ^ {p _ n}}} ^ 2
		}{
			\omega((3m _ n + 2)2 ^ {p _ n})
		}
		=
		\sum _ {n = 1} ^ \infty
		\frac{
			\abs{\mu} ^ {2n}
			\omega((3m _ n + 2)2 ^ {p _ n})
		}{
			\omega((2 m _ 1 + 1)2 ^ {p _ 1}) ^ 2
		}
		\abs{c _ k} ^ 2.
		\]
		Since $\omega$ is bounded from below, the previous series converges only in the case where $c _ k = 0$.
		
		On the other hand suppose that
		$
		\{T ^ n(k) ; n \geq 0\}
		\cap
		\{2 ^ i; i \geq 0\} 
		\ne \emptyset
		$, 
		we will follow the lines of the proof of Theorem \ref{densityeigenvectors} and claim that $(j _ n) _ {n \geq 1}$ is stationary.
		Indeed suppose that $(j _ n) _ {n \geq 1}$ is not stationary, that is to say is strictly increasing by (\ref{sequences2v}). 
		There exists $N \geq 1$ such that
		$
		T ^ N(k) \in \{2 ^ i ; i \geq 0\}
		$,
		so
		$
		T ^ n(k) \in \{2 ^ i ; i \geq 0\}
		$
		for every $n \geq N$ and
		$
		T ^ {j _ N}(k) \in \{2 ^ i ; i \geq 0\}
		$
		since $j _ N \geq N$.
		Then $T ^ {j _ N}(k) = 3m _ N + 2 \in \{2 ^ i ; i \geq 2\}$ by (\ref{sequences2ii}) and (\ref{sequences2iii}), which implies that $j _ {N + 1} = j _ N$. This is impossible. 
		So $(j _ n) _ {n \geq 1}$ is stationary and there exists $N \geq 1$ such that 
		$
		3m _ {N} + 2 \in 
		\{2 ^ i ; i \geq 2\}
		$ by (\ref{sequences2iv}).
		This implies by (\ref{sequences2i}) that 
		\[
		c _ k = 
		\frac{
			\mu ^ {-N}
			\omega((2m _ 1 + 1)2 ^ {p _ 1})
		}{
			\omega((3m _ N + 2)2 ^ {p _ N})
		}
		c _ {(3m _ N + 2)2 ^ {p _ N}}
		= 0
		\]
		because $(3 m _ N + 2)2 ^ {p _ N} \in \{2 ^ i; i \geq 2\}$.
		We have shown that $c _ k = 0$ for every $k \geq 3$.
		Thus $f = 0$ and so $\mu$ does not belong to $\sigma _ p(\mathcal T ^ *)$ either.
	\end{proof}

\section{Linear dynamics of the operator $\mathcal T$}

	Our aim is now to understand the links between the dynamics of the Collatz map and those of the operator $\mathcal T$.
	We begin by recalling briefly the definition of some important properties in linear dynamics (hypercyclicity, chaos, frequent hypercyclicity and ergodicity), before investigating whether the operator $\mathcal T$ acting on $\mathcal X _ \omega$ satisfies each of these properties.

	\subsection{Hypercyclicity}
	
	A much studied dynamical property is called \textit{hypercyclicity}. We refer the reader to the books \cite{bay09} and \cite{gro11} for an in-depth study of this notion. 
	Let $X$ be a separable infinite-dimensional complex Banach (or Fréchet) space. We will denote by $\mathcal B(X)$ the set of bounded (or continuous) linear operators on $X$.

	\begin{Def}[{\cite[Definition 2.15]{gro11}}]
		An operator $\mathcal A \in \mathcal B(X)$ is said to be \textit{hypercyclic} if there exists $x \in X$ such that its orbit
		$
		\{
		\mathcal A ^ n x ; 
		n \geq 0
		\}
		$ 
		under $\mathcal A$ is dense in $X.$
		In this case $x$ is called a hypercyclic vector for $\mathcal A$.
	\end{Def}

	Recall that a $G _ \delta$-set in $X$ is a countable intersection of open sets of $X$.
	The following theorem shows that either the hypercyclic vectors of an operator $\mathcal A \in \mathcal B(X)$ forms a dense $G _ \delta$-set in $X$ or $\mathcal A$ does not admit any hypercyclic vectors.
	
	\begin{Thm}[Birkhoff's transitivity theorem, {\cite[Theorems 1.16 and 2.19]{gro11}}]
		An operator $\mathcal A \in \mathcal B(X)$ is hypercyclic if and only if for every non-empty open sets $U$ and $V$ in $X$, there exists $n \geq 0$ such that 
		$	
			\mathcal A ^ n(U) \cap V \ne \emptyset.
		$
		In this case, the hypercyclic vectors for $\mathcal A$ form a dense $G _ \delta$-set in $X$.
	\end{Thm}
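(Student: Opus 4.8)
The plan is to prove the two directions of the equivalence separately, and then to read off the description of the hypercyclic vectors from the same machinery used for the harder implication. Throughout, the relevant hypotheses are that $X$ is separable and completely metrizable and that the maps $\mathcal A ^ n$ are continuous.

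First I would dispatch the easy implication: if $\mathcal A$ is hypercyclic with hypercyclic vector $x$, I want topological transitivity. Given nonempty open sets $U$ and $V$, density of the orbit $\{\mathcal A ^ n x ; n \geq 0\}$ yields some $m \geq 0$ with $\mathcal A ^ m x \in U$. The key observation is that the tail $\{\mathcal A ^ k x ; k \geq m\}$ remains dense: it contains the dense orbit of $x$ with only the finitely many points $x, \mathcal A x, \ldots, \mathcal A ^ {m - 1} x$ removed, and $X$, being an infinite-dimensional topological vector space, has no isolated points, so deleting a finite set preserves density. Hence this tail meets $V$, giving $n \geq 0$ with $\mathcal A ^ {m + n} x = \mathcal A ^ n(\mathcal A ^ m x) \in V$. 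Since $\mathcal A ^ m x \in U$, this produces $\mathcal A ^ n(U) \cap V \neq \emptyset$.

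For the converse together with the $G _ \delta$ statement, I would run a Baire category argument. As $X$ is separable, fix a countable base $(V _ k) _ {k \geq 1}$ of nonempty open sets, and for each $k$ put
\[
	G _ k = \bigcup _ {n \geq 0} \mathcal A ^ {-n}(V _ k).
\]
Each $G _ k$ is open, being a union of preimages of an open set under the continuous maps $\mathcal A ^ n$. Topological transitivity forces each $G _ k$ to be dense: for any nonempty open $U$ there is $n$ with $\mathcal A ^ n(U) \cap V _ k \neq \emptyset$, so some $u \in U$ satisfies $\mathcal A ^ n u \in V _ k$, that is $u \in U \cap G _ k$. Since $X$ is completely metrizable, the Baire category theorem then makes $\bigcap _ {k \geq 1} G _ k$ a dense $G _ \delta$ set.

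Finally I would identify this intersection with the set of hypercyclic vectors. A vector $x$ lies in $\bigcap _ k G _ k$ exactly when, for every $k$, there is $n$ with $\mathcal A ^ n x \in V _ k$; because $(V _ k)$ is a base, this says precisely that the orbit of $x$ meets every nonempty open set, i.e. that $x$ is hypercyclic. Thus the set of hypercyclic vectors equals $\bigcap _ k G _ k$, a dense $G _ \delta$; in particular it is nonempty, so transitivity implies hypercyclicity and the equivalence closes. The only points needing care are the absence of isolated points in the first implication and the completeness required for Baire, both built into the standing hypotheses on $X$; I expect the main conceptual step to be the Baire category argument establishing density of each $G _ k$.
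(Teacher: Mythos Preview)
Your proof is correct and is essentially the standard textbook argument (indeed the one found in the reference \cite{gro11} to which the paper defers). Note, however, that the paper itself does not prove this statement: it is recalled as background material with a citation to \cite[Theorems 1.16 and 2.19]{gro11}, so there is no in-paper proof to compare against. Your treatment of both implications is sound, including the care you take with the absence of isolated points in the forward direction and the invocation of the Baire category theorem in the converse.
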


	Important examples of hypercyclic operators can be found in the literature.
	On the Fréchet space $Hol(\mathbf C)$ of entire functions endowed with the seminorms defined by
	$
		\|f \| _ {n, \infty}
		=
		\sup \{
			\abs{f(z)};
			\abs z \leq n
		\}
	$
	for $n \geq 1$, the Birkhoff's operator $T ^ {(a)} \colon f \mapsto f(\cdot + a)$ is hypercyclic if and only if $a \ne 0$.
	The derivation operator $D$ acting on $Hol(\mathbf C)$ is also hypercyclic.
	In the Banach space setting, the simplest examples of hypercyclic operators are given by the Rolewicz's operators $\lambda B$ for $\abs \lambda > 1$, where $B$ is the backward shift on $\ell ^ 2(\mathbf N)$ defined by 
	$
		B(x _ n) _ {n \geq 1} =
		(x _ {n + 1}) _ {n \geq 1}
	$.\\

	The following criterion provides a pratical mean to prove the hypercyclicity of an operator.

	\begin{Thm}[Hypercyclicity Criterion, {\cite[Theorem 3.12]{gro11}}] 
		\label{HCC}
		Let $\mathcal A \in \mathcal B(X)$.
		Suppose that there exist dense subsets $X _ 0$ and $Y _ 0$ of $X$, an increasing sequence $(n _ k) _ {k \geq 1}$ of integers and a sequence $(S _ {n _ k} \colon Y _ 0 \to X) _ {k \geq 1}$ of maps such that:
		\begin{enumerate}[(i)]
			\item
			$
			\mathcal A ^ {n _ k} x 
			\to 0
			$
			as 
			$
			k \to + \infty
			$ 
			for every $x \in X _ 0$;
			\item
			$
			S _ {n _ k} y 
			\to 0
			$
			as 
			$
			k \to + \infty
			$ 
			for every $y \in Y _ 0$;
			\item
			$
			\mathcal A ^ {n _ k} S _ {n _ k} y
			\to y
			$
			as 
			$
			k \to + \infty
			$ 
			for every $y \in Y _ 0.$
		\end{enumerate}
		Then the operator $\mathcal A$ is hypercyclic.
	\end{Thm}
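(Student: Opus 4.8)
The plan is to invoke Birkhoff's transitivity theorem, so that it suffices to exhibit, for any two non-empty open subsets $U$ and $V$ of $X$, some index $n \geq 0$ with $\mathcal A ^ n(U) \cap V \neq \emptyset$. In fact I will produce such an $n$ among the terms of the given increasing sequence $(n _ k) _ {k \geq 1}$. Since $X _ 0$ and $Y _ 0$ are dense in $X$ while $U$ and $V$ are non-empty and open, I can first fix a point $x \in X _ 0 \cap U$ and a point $y \in Y _ 0 \cap V$.

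The key idea is then to consider, for each $k \geq 1$, the perturbed vector
\[
	u _ k = x + S _ {n _ k} y \in X,
\]
and to check that for $k$ large enough $u _ k$ lies in $U$ while $\mathcal A ^ {n _ k} u _ k$ lies in $V$. For the first claim, assumption (ii) gives $S _ {n _ k} y \to 0$ as $k \to + \infty$, hence $u _ k \to x$; since $U$ is open and contains $x$, it follows that $u _ k \in U$ for all sufficiently large $k$. For the second claim, I compute
\[
	\mathcal A ^ {n _ k} u _ k = \mathcal A ^ {n _ k} x + \mathcal A ^ {n _ k} S _ {n _ k} y,
\]
and combine assumption (i), which yields $\mathcal A ^ {n _ k} x \to 0$, with assumption (iii), which yields $\mathcal A ^ {n _ k} S _ {n _ k} y \to y$; therefore $\mathcal A ^ {n _ k} u _ k \to y$. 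As $V$ is open and contains $y$, we obtain $\mathcal A ^ {n _ k} u _ k \in V$ for all sufficiently large $k$.

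Choosing any single $k$ large enough for both conditions to hold simultaneously, the vector $u _ k$ satisfies $u _ k \in U$ and $\mathcal A ^ {n _ k} u _ k \in V$, so that $\mathcal A ^ {n _ k}(U) \cap V \neq \emptyset$. Birkhoff's transitivity theorem then gives that $\mathcal A$ is hypercyclic, which completes the proof. The argument is entirely elementary once the perturbation $u _ k = x + S _ {n _ k} y$ has been written down, and I do not expect any serious obstacle: the only genuinely creative step is precisely this choice, which decouples the ``forward'' contraction of $\mathcal A ^ {n _ k}$ on $X _ 0$ furnished by (i) from the ``approximate right inverse'' behaviour of $S _ {n _ k}$ on $Y _ 0$ furnished by (ii) and (iii).
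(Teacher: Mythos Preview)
Your proof is correct and is exactly the standard argument for the Hypercyclicity Criterion via Birkhoff's transitivity theorem. Note, however, that the paper does not give its own proof of this theorem: it is quoted from \cite[Theorem~3.12]{gro11} as background material, so there is nothing to compare against. Your argument matches the textbook proof in \cite{gro11}.
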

	
	In the case where $\omega = \omega _ 0$, and provided that the map $T$ does not admit any non-trivial cycle, it is shown is \cite[Theorem 2.2]{nek21} that $\mathcal T$ satisfies the Hypercyclicity Criterion, and is thus hypercyclic.
	Our first theorem generalizes this result by showing that $\mathcal T$ is hypercyclic on $\mathcal X _ \omega$ under a rather mild hypothesis on the weight $\omega$, independently of any assumption on the existence of cycles for the Collatz map.
	
	\begin{Thm}\label{hypercyclicity}
		If 
		$
			\omega
		$ 
		is bounded from below
		and if
		$
			\omega(k2 ^ n) \to + \infty
		$
		as 
		$
			n \to + \infty
		$
		for every $k \geq 3$,
		then $\mathcal T$ is hypercyclic.
		
		In particular if $\omega = \omega _ 0$ then $\mathcal T$ is hypercyclic on $\mathcal X$.
	\end{Thm}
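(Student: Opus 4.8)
The plan is to deduce hypercyclicity from the Hypercyclicity Criterion (Theorem \ref{HCC}), applied with the full sequence $n_k = k$. The guiding idea is to handle the two halves of the criterion with two \emph{different} dense sets: for the forward condition, where the iterates must decay, I will use the eigenvectors of $\mathcal T$, while for the backward condition, where approximate inverses must be built, I will use the monomials $z^k$. Concretely, set $X_0 = \text{span}[h_m(\mu, \cdot); m \geq 0, \mu \in \mathbf D]$ and $Y_0 = \text{span}[z^k; k \geq 3]$. Both are dense in $\mathcal X_\omega$: the first by Theorem \ref{densityeigenvectors} (using that $\omega$ is bounded from below), and the second because the polynomials are always dense.

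For condition (i), I would use that each generator of $X_0$ is an eigenvector with eigenvalue in $\mathbf D$. By Proposition \ref{eigenvectors} we have $\mathcal T h_m(\mu, \cdot) = \mu\, h_m(\mu, \cdot)$, so $\mathcal T^{n} h_m(\mu, \cdot) = \mu^{n} h_m(\mu, \cdot) \to 0$ as $n \to +\infty$ since $\abs\mu < 1$. By linearity this gives $\mathcal T^{n} x \to 0$ for every $x \in X_0$, along the full sequence $n_k = k$. The decisive feature here is that this convergence is \emph{unconditional}: it requires no knowledge of the $T$-orbits, which is precisely what lets the conclusion be independent of the Collatz Conjecture.

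For conditions (ii) and (iii), I would define maps $S_n \colon Y_0 \to \mathcal X_\omega$ on the generators by $S_n z^k = z^{k2^n}$ and extend them linearly. Since $T(2j) = j$ and $k2^j \geq 3$ for all $0 \leq j \leq n$ when $k \geq 3$, halving $n$ times yields $T^n(k2^n) = k$, so that $\mathcal T^n S_n z^k = z^{T^n(k2^n)} = z^k$ in $\mathcal X_\omega$; thus condition (iii) holds with equality. For condition (ii), the norm $\|S_n z^k\|_\omega^2 = 1/\omega(k2^n)$ tends to $0$ by the hypothesis $\omega(k2^n) \to +\infty$, and the same holds for finite linear combinations. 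Hence all three conditions of Theorem \ref{HCC} are met along $n_k = k$, and $\mathcal T$ is hypercyclic.

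The main point to get right, rather than a genuine obstacle, is the choice of $X_0$: one is tempted to take the monomials $z^k$ on the forward side as well, but then $\mathcal T^n z^k = z^{T^n(k)}$ need not decay — if $k$ lies on a hypothetical nontrivial cycle its orbit stays bounded away from the trivial cycle, and if it diverges one has no control on $\omega(T^n(k))$. Replacing the monomials by the eigenvectors $h_m(\mu, \cdot)$ on the forward side removes this dependence entirely, and this is where Theorem \ref{densityeigenvectors} does the essential work. The remaining verifications (well-definedness and linearity of $S_n$, and the elementary norm estimate) are routine.
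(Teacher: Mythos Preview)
Your proof is correct and follows essentially the same approach as the paper: both apply the Hypercyclicity Criterion with $X_0 = \text{span}[h_m(\mu,\cdot); m \geq 0, \mu \in \mathbf D]$ (dense by Theorem \ref{densityeigenvectors}) for the forward condition via $\mathcal T^n h_m(\mu,\cdot) = \mu^n h_m(\mu,\cdot) \to 0$, and $Y_0 = \text{span}[z^k; k \geq 3]$ with the right inverse $z^k \mapsto z^{2k}$ for the backward conditions. The only cosmetic difference is that the paper defines a single map $S$ with $Sz^k = z^{2k}$ and iterates it, whereas you define $S_n$ directly; your added remark explaining why the monomials cannot serve as $X_0$ is a nice clarification of where the improvement over \cite{nek21} comes from.
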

	\begin{proof}
		It suffices to prove that $\mathcal T$ satisfies the Hypercyclicity Criterion (Theorem \ref{HCC}). 
		Consider the subspace
		$
			X _ 0 = 
			\text{span}[
				h _ m(\mu, \cdot) ; 
				\mu \in \mathbf D, m \geq 0
			]
		$,
		which is dense in $\mathcal X _ \omega$ by Theorem \ref{densityeigenvectors}.
		We have 
		$
			\mathcal T ^ n h _ m(\mu, \cdot) =
			\mu ^ n h _ m(\mu, \cdot)
			\to 
			0
		$ 
		as $n \to + \infty$ for every $\mu \in \mathbf D$ and every $m \geq 0$.
		Thus this is true for any linear combination in $X _ 0$.
		Consider now the dense subspace
		$
			Y _ 0 = 
			\text{span}[
				z ^ k ;
				k \geq 3
			]
		$
		in $\mathcal X _ \omega$ and the map $S \colon Y _ 0 \to Y _ 0$ defined by 
		$
			Sz ^ k = z ^ {2k}
		$
		for every $k \geq 3$.
		Then
		$
			\|S ^ n z ^ k\| _ \omega ^ 2 =
			\|z ^ {k2 ^ n} \| _ \omega ^ 2 =
			1 / \omega(k2 ^ n)
			\to 0
		$
		as $n \to + \infty$ and 
		$
			\mathcal T S z ^ k = 
			\mathcal T z ^ {2k} = 
			z ^ {T(2k)} = 
			z ^ k
		$
		for every $k \geq 3$.
		Thus this is still true for any linear combination in $Y _ 0$.
		Then $\mathcal T$ satisfies the Hypercyclicity Criterion, so $\mathcal T$ is hypercyclic.
	\end{proof}

	Actually, the role played by eigenvectors in the hypercyclicity of an operator appears explicitly in the following criterion.
	
	\begin{Thm}[Godefroy-Shapiro Criterion, {\cite[Theorem 3.1]{gro11}}]\label{godefroy-shapiro}
		Let $\mathcal A \in \mathcal B(X)$.
		Suppose that the subspaces
		\[
			X _ 0 = 
			\text{span}[
				\ker(\mathcal A - \mu) ;
				\abs \mu < 1
			]
			\quad
			\text{and}
			\quad
			Y _ 0 =
			\text{span}[
				\ker(\mathcal A - \mu) ;
				\abs\mu > 1
			]
		\]
		are dense in $X$.
		Then $\mathcal A$ is hypercyclic.
	\end{Thm}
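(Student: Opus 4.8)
The plan is to deduce this criterion from the Hypercyclicity Criterion (Theorem \ref{HCC}), using the full sequence of integers $n _ k = k$ and taking for the two required dense subspaces exactly the subspaces $X _ 0$ and $Y _ 0$ appearing in the statement, which are dense by hypothesis.

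First I would check condition (i) of Theorem \ref{HCC}. A vector $x \in X _ 0$ is a finite linear combination $x = \sum _ i a _ i u _ i$ of eigenvectors, with $\mathcal A u _ i = \mu _ i u _ i$ and $\abs{\mu _ i} < 1$. Then $\mathcal A ^ n x = \sum _ i a _ i \mu _ i ^ n u _ i \to 0$ as $n \to + \infty$, since $\abs{\mu _ i} < 1$ for each $i$.

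The heart of the argument is the construction of the maps $S _ {n} \colon Y _ 0 \to X$. Every eigenvalue $\mu$ contributing to $Y _ 0$ satisfies $\abs\mu > 1$, hence $\mu \neq 0$. Because eigenspaces attached to distinct eigenvalues are in direct sum, each $y \in Y _ 0$ admits a unique finite decomposition $y = \sum _ j y _ j$ with $y _ j \in \ker(\mathcal A - \mu _ j)$ and $\abs{\mu _ j} > 1$; I would then define $S y = \sum _ j \mu _ j ^ {-1} y _ j$, obtaining a well-defined linear map $S \colon Y _ 0 \to Y _ 0$, and set $S _ n = S ^ n$. A direct computation gives $\mathcal A S y = \sum _ j \mu _ j ^ {-1} \mathcal A y _ j = \sum _ j y _ j = y$, so that $\mathcal A ^ n S _ n y = y$ for every $y \in Y _ 0$, which yields condition (iii). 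Likewise $S _ n y = \sum _ j \mu _ j ^ {-n} y _ j \to 0$ as $n \to + \infty$ because each $\abs{\mu _ j} > 1$, giving condition (ii).

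With the three hypotheses of Theorem \ref{HCC} verified, the hypercyclicity of $\mathcal A$ follows immediately. The only delicate point is the well-definedness of $S$, which relies on the linear independence of eigenvectors associated to distinct eigenvalues; this is a minor obstacle, settled by the fact that $Y _ 0$ is by construction the span of the corresponding eigenspaces and that $0$ never occurs as an eigenvalue in $Y _ 0$. Notably, no growth or summability condition on the eigenvalues is required, which is precisely what makes this criterion so convenient to apply to the operator $\mathcal T$ in combination with Theorem \ref{densityeigenvectors}.
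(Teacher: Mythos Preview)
Your argument is correct and is essentially the standard derivation of the Godefroy--Shapiro Criterion from the Hypercyclicity Criterion. Note, however, that the paper does not supply its own proof of this statement: Theorem~\ref{godefroy-shapiro} is quoted from \cite[Theorem~3.1]{gro11} as background material, so there is no in-paper proof to compare against. Your proof is precisely the one given in that reference (and in \cite[Theorem~1.44]{bay09}), so nothing is missing or unconventional.
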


	This criteria is satisfied as soon as there are enough complex numbers $\mu$ such that $h _ m(\mu, \cdot)$ belongs to $\mathcal X _ \omega$ for every $m \geq 0$.
	
	\begin{Thm}
		If $\omega$ is bounded from below and if there exists $\rho > 1$ such that for every $k \geq 3$ the sequence $(\rho ^ n / \omega(k2 ^ n)) _ {n \geq 0}$ is bounded, then $\mathcal T$ satisfies the Godefroy-Shapiro Criterion.
		
		In particular if $\omega = \omega _ 0$, then taking $\rho = 2$ gives us that $\mathcal T$ satisfies the Godefroy-Shapiro Criterion.
	\end{Thm}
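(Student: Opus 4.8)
The plan is to verify the two density conditions in the Godefroy--Shapiro Criterion (Theorem \ref{godefroy-shapiro}) applied to $\mathcal A = \mathcal T$. The density of $X _ 0 = \text{span}[\ker(\mathcal T - \mu); |\mu| < 1]$ is immediate: since $\omega$ is bounded from below, Theorem \ref{densityeigenvectors} already gives that $\text{span}[h _ m(\mu, \cdot); m \geq 0, \mu \in \mathbf D]$ is dense in $\mathcal X _ \omega$, and each $h _ m(\mu, \cdot)$ with $\mu \in \mathbf D$ lies in $\ker(\mathcal T - \mu) \subseteq X _ 0$ by Proposition \ref{eigenvectors}. So all the work lies in establishing the density of $Y _ 0 = \text{span}[\ker(\mathcal T - \mu); |\mu| > 1]$.

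First I would use the hypothesis to produce eigenvectors whose eigenvalue has modulus larger than $1$. The boundedness of $(\rho ^ n / \omega(k 2 ^ n)) _ n$ means $1 / \omega(k 2 ^ n) \leq C _ k \rho ^ {-n}$ for some constant $C _ k$, so for $|\mu| < \sqrt\rho$ the two series in Proposition \ref{eigenvectors} are dominated by $C _ k \sum _ n (|\mu| ^ 2 / \rho) ^ n < + \infty$. Since $\rho > 1$, the annulus $1 < |\mu| < \sqrt\rho$ is nonempty, and for any such $\mu$ all the functions $h _ m(\mu, \cdot)$, $m \geq 0$, belong to $\mathcal X _ \omega$ and are eigenvectors of $\mathcal T$ for the eigenvalue $\mu$ of modulus $> 1$; hence $h _ m(\mu, \cdot) \in Y _ 0$.

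The heart of the argument is to show that these eigenvectors span a dense subspace, which I would do by adapting the proof of Theorem \ref{densityeigenvectors}. Given $f \in Y _ 0 ^ \perp$ with $f(z) = \sum _ k c _ k z ^ k$, the functions $\varphi _ m(\mu) = \langle h _ m(\mu, \cdot), f \rangle$ and $\varphi _ 0$ are the very same power series in $\mu$ as before, and they vanish on the annulus $1 < |\mu| < \sqrt\rho$. The key improvement over Theorem \ref{densityeigenvectors} is a sharper radius-of-convergence estimate: from $f \in \mathcal X _ \omega$ the terms $|c _ {k 2 ^ n}| ^ 2 / \omega(k 2 ^ n)$ are bounded by some $S _ k < + \infty$, so $|c _ {k 2 ^ n}| / \omega(k 2 ^ n) \leq \sqrt{S _ k} / \sqrt{\omega(k 2 ^ n)} \leq \sqrt{S _ k C _ k}\, \rho ^ {-n/2}$, whence each $\varphi _ m$ and $\varphi _ 0$ (whose coefficients have the same moduli) has radius of convergence at least $\sqrt\rho > 1$. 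Being holomorphic on the disk $|\mu| < \sqrt\rho$ and vanishing on the nonempty open subannulus $1 < |\mu| < \sqrt\rho$, they vanish identically by the identity theorem; this yields exactly the relations $c _ {(6m + 4)2 ^ n} / \omega((6m + 4)2 ^ n) = c _ {(2m + 1)2 ^ n} / \omega((2m + 1)2 ^ n)$ and $c _ {2 ^ {n + 2}} = 0$. Then Claim \ref{claim} forces $c _ k = 0$ for every $k \geq 3$, i.e. $f = 0$, so $Y _ 0$ is dense.

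With both $X _ 0$ and $Y _ 0$ dense, the Godefroy--Shapiro Criterion is satisfied. The main obstacle is precisely the radius-of-convergence estimate: vanishing on an annulus rather than a disk is only useful once the $\varphi _ m$ are known to be holomorphic on a disk strictly larger than $\mathbf D$, which is exactly what the quantitative bound $1 / \omega(k 2 ^ n) \leq C _ k \rho ^ {-n}$ secures. Finally, for $\omega = \omega _ 0$ one checks directly that $\rho = 2$ works, since $2 ^ n / \omega _ 0(k 2 ^ n) = \pi 2 ^ n / (k 2 ^ n + 1) \leq \pi / k$ is bounded for every $k \geq 3$.
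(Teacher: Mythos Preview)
Your proof is correct and follows essentially the same approach as the paper: use Theorem \ref{densityeigenvectors} for the density of $X_0$, and for $Y_0$ rerun the orthogonality argument of that theorem with the $\varphi_m$ now holomorphic on the larger disk $\{|\mu| < \sqrt\rho\}$, which is exactly what the hypothesis on $\omega$ guarantees via Proposition \ref{eigenvectors}. The paper only sketches this last step, whereas you spell out the radius-of-convergence estimate and the appeal to Claim \ref{claim}; the parenthetical ``whose coefficients have the same moduli'' is slightly imprecise (the coefficients of $\varphi_m$ are differences of two such terms), but the bound and the conclusion are unaffected.
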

	\begin{proof}
		By Proposition \ref{eigenvectors}, $h _ m(\mu, \cdot)$ belongs to $\mathcal X _ \omega$ for every $m \geq 0$ and every $\mu \in \mathbf C$ such that $\abs\mu < \sqrt \rho$.
		By Theorem \ref{densityeigenvectors}, it suffices to show that the subspace
		$
			Y _ 0 =
			\text{span}[
				h _ m(\mu, \cdot) ;
				m \geq 0, 
				1 < \abs \mu < \sqrt \rho
			]
		$
		is dense in $\mathcal X _ \omega$.
		This can be proved by following the steps of the proof of Theorem \ref{densityeigenvectors} and considering the holomorphic functions $\varphi _ m$ and $\varphi _ 0$ on 
		$
			\{
				\mu \in \mathbf C ;
				\abs \mu < \sqrt \rho
			\}
		$
		instead of $\mathbf D$.
	\end{proof}

	\subsection{Frequent hypercyclicity and ergodicity}
	
	The notion of frequent hypercyclicity is a reinforcement of that of hypercyclicity.
	It quantifies the frequency with which the orbit of a vector visits a non-empty open set.
	We refer the reader to \cite{bay06} for more on this notion.
	
	\begin{Def}[{\cite[Definition 9.2]{gro11}}]
		An operator $\mathcal A \in \mathcal B(X)$ is said to be \textit{frequently hypercyclic} if there exists a vector $x \in X$ such that for every non-empty open set $U$ in $X$
		\[
			\liminf _ {N \to + \infty}
				\frac{
					\text{card}\{
						0 \leq n \leq N ; 
						\mathcal A ^ n x \in U
					\}
				}{
					N + 1
				}
			> 0.
		\]
		In this case $x$ is called a frequently hypercyclic vector for $\mathcal A$.
	\end{Def}

	The Hypercyclicity Criterion admits a frequently hypercyclic version, which is the following theorem. 

	\begin{Thm}[Frequent Hypercyclicity Criterion, {\cite[Theorem 9.9]{gro11}}]
		Let $\mathcal A \in \mathcal B(X)$.
		Suppose that there exist a dense set $X _ 0$ and a map $S \colon X _ 0 \to X _ 0$ such that for every $x \in X _ 0$:
		\begin{enumerate}
			\item 
				$
					\sum _ {n \geq 0}
						\mathcal A ^ n x
				$
				converges unconditionally;
			\item
				$
					\sum _ {n \geq 0}
						S ^ n x
				$
				converges unconditionally;
			\item
				$\mathcal A S x = x$.
		\end{enumerate}
		Then $\mathcal A$ is frequently hypercyclic.
	\end{Thm}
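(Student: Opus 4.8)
The plan is to establish this criterion by explicitly constructing a single frequently hypercyclic vector for $\mathcal A$, following the classical approach of Bayart and Grivaux (cf. \cite{bay06}). Since $X$ is separable and $X _ 0$ is dense, I would first fix a sequence $(y _ k) _ {k \geq 1}$ that is dense in $X _ 0$, and hence in $X$; the aim is to produce a vector whose orbit returns, with positive lower frequency, to a prescribed neighbourhood of each $y _ k$.

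The combinatorial heart of the argument is a lemma asserting the existence of pairwise disjoint subsets $A _ k \subset \mathbf Z _ +$, $k \geq 1$, each of positive lower density, enjoying a separation property: the quantity $\abs{n - m}$ is bounded below by an expression growing with the block indices whenever $n \in A _ k$ and $m \in A _ l$ are distinct, and the gaps within each individual $A _ k$ grow. Granting this, I would set
\[
	x = \sum _ {k \geq 1} \sum _ {n \in A _ k} S ^ n y _ k,
\]
and check that this double series converges in $X$: the inner series is a subseries of the unconditionally convergent series $\sum _ {n \geq 0} S ^ n y _ k$, hence converges, and the control on $\min A _ k$ as $k \to + \infty$ (part of the lemma) lets one absorb the outer summation.

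The core estimate is then the following. Fix $k \geq 1$ and let $m \in A _ k$. Using that $\mathcal A S = \mathrm{id}$ on $X _ 0$, so that $\mathcal A ^ m S ^ m = \mathrm{id}$ there, I would expand
\[
	\mathcal A ^ m x - y _ k =
	\sum _ {\substack{n \in A _ l \\ (l, n) \neq (k, m)}}
		\mathcal A ^ m S ^ n y _ l,
\]
splitting each term according to whether $n > m$, in which case it equals $S ^ {n - m} y _ l$, or $n < m$, in which case it equals $\mathcal A ^ {m - n} y _ l$ (disjointness of the $A _ k$ rules out $n = m$ in the remainder). Because $\sum _ j S ^ j y _ l$ and $\sum _ j \mathcal A ^ j y _ l$ converge unconditionally, their tails are uniformly small, while the separation property guarantees that every exponent $\abs{n - m}$ occurring above is large; summing these tails over $l$ shows that $\mathcal A ^ m x$ is close to $y _ k$, uniformly over $m \in A _ k$. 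Consequently, for any neighbourhood $U$ of $y _ k$ the set $\{m \geq 0 ; \mathcal A ^ m x \in U\}$ contains $A _ k$ up to finitely many exceptions, and therefore has positive lower density; as the $y _ k$ are dense, this makes $x$ frequently hypercyclic.

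The main obstacle is the combinatorial lemma together with the simultaneous control of all the error terms: one must choose the $A _ k$ so that each retains positive lower density while being sparse and well-separated enough that, for every $k$ and every $m \in A _ k$, the full double-indexed remainder is uniformly small. Arranging the tail bounds furnished by unconditional convergence to interact correctly with the block structure, so that neither the within-block nor the cross-block contributions spoil the estimate, is the delicate point; everything else reduces to the identities $\mathcal A ^ m S ^ n = \mathcal A ^ {m - n}$ or $S ^ {n - m}$ and routine summation.
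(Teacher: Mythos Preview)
The paper does not supply a proof of this theorem: it is quoted verbatim from \cite[Theorem 9.9]{gro11} as background, and is used only as a reference point before the ergodicity approach is taken. Your sketch is the standard Bayart--Grivaux construction and is correct as an outline, but there is nothing in the paper to compare it against.
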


	The three hypercyclic operators presented before are actually frequently hypercyclic.
	The operators $\lambda B$, $D$ and $T ^ {(a)}$ are frequently hypercyclic respectively on $\ell ^ 2(\mathbf N)$ and $H(\mathbf C)$ if $\abs\lambda > 1$ and if $a \ne 0$.

	In order to prove that $\mathcal T$ is frequently hypercyclic, we will actually show that $\mathcal T$ is ergodic with respect to a Gaussian measure of full support.

	Let $H$ be a complex Hilbert space, let $\mathcal B$ be the $\sigma$-algebra of Borel subsets of $H$ and let $m$ be a probability measure on $(H, \mathcal B)$.
	
	\begin{Def}[{\cite[Definition 3.9]{bay06}}]
		A transformation
		$
			\mathcal A \colon (H, \mathcal B, m) \to (H, \mathcal B, m)
		$
		in $\mathcal B(H)$ is said to \textit{preserve the measure} $m$ if 
		$
			m(\mathcal A ^ {-1}(B)) = m(B)
		$
		for every $B \in \mathcal B$.
		A measure-preserving transformation $\mathcal A \colon (H, \mathcal B, m) \to (H, \mathcal B, m)$ is said to be \textit{ergodic} if for every $B \in \mathcal B$,
		$
			\mathcal A ^ {-1}(B) = B
		$ 
		implies that $m(B) \in \{0, 1\}$.
	\end{Def}

	The eigenvectors of an operator play an important role in the study of its dynamics.
	The fact that the eigenvectors of $\mathcal T$ span a dense subspace of $\mathcal X _ \omega$ allowed us to show that $\mathcal T$ is hypercyclic under some assumptions on the weight $\omega$. 
	In order to show that $\mathcal T$ is ergodic with respect to a Gaussian measure with full support, we will rely on the properties of the eigenvectors of $\mathcal T$ associated to unimodular eigenvalues.
	We first recall a few revelant definitions.

	\begin{Def}[{\cite[Definition 3.1]{bay06}}]
		An operator $\mathcal A \in \mathcal B(H)$ is said to have a 
		\textit{perfectly spanning set of eigenvectors associated to unimodular eigenvalues}
		if there exists a continuous probability measure $\sigma$ on the unit circle $\mathbf T$ such that the eigenvectors of $\mathcal A$ associated to eigenvalues $\mu \in A$ span a dense subspace in $H$ for every $\sigma$-measurable subset $A$ of $\mathbf T$ satistying $\sigma(A) = 1$.
	\end{Def}

	\begin{Def}[{\cite[Definitions 3.13 and 3.14]{bay06}}]
		Let $(\Omega, \mathcal F, P)$ be a probability space.
		A measurable function 
		$
			f \colon (\Omega, \mathcal F, P) \to \mathbf C
		$ 
		is said to have 
		\textit{complex symmetric Gaussian distribution}
		if $\Re(f)$ and $\Im(f)$ have independent centered Gaussian distribution and the same variance. 
		
		A probability measure $m$ on $(H, \mathcal B)$ is said to be a \textit{Gaussian measure} if the function $y \mapsto \pdtsca{y, x}$ has complex symmetric Gaussian distribution for every $x \in H$.
		
		A measure $m$ on $(H, \mathcal B)$ is said to have a \textit{full support} if $m(U) > 0$ for every open set $U \in \mathcal B$.
	\end{Def}

	\begin{Thm}[{\cite[Theorem 3.22]{bay06}}]
		\label{perfectly spanning}
		If an operator $\mathcal A \in \mathcal B(H)$ admits a perfectly spanning set of eigenvectors associated to unimodular eigenvalues, then there exists a Gaussian invariant measure with full support $m$ on $H$ such that 
		$
			\mathcal A \colon (H, \mathcal B, m) \to (H, \mathcal B, m)
		$ 
		is ergodic.
	\end{Thm}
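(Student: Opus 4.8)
The plan is to construct $m$ explicitly as the distribution of an $H$-valued Gaussian random variable built out of the eigenvectors, and then to verify invariance, full support, and ergodicity in turn. Let $\sigma$ denote the continuous probability measure on $\mathbf T$ furnished by the definition of a perfectly spanning set of eigenvectors associated to unimodular eigenvalues.

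First I would extract a workable family of eigenvectors. Using a measurable selection argument, one produces a countable family of measurable \emph{eigenvector fields} $E _ i \colon \mathbf T \to H$, $i \geq 1$, satisfying $\mathcal A E _ i(\lambda) = \lambda E _ i(\lambda)$ for $\sigma$-almost every $\lambda$, chosen so that for every Borel set $A \subset \mathbf T$ with $\sigma(A) = 1$ the vectors $\{E _ i(\lambda); i \geq 1, \lambda \in A\}$ span a dense subspace of $H$; this last property is precisely a reformulation of perfect spanning. After rescaling the $E _ i$ by a suitable summable sequence of positive constants, I may assume $\sum _ {i \geq 1} \int _ {\mathbf T} \|E _ i(\lambda)\| ^ 2 \, d\sigma(\lambda) < + \infty$.

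Next I would define the measure. Let $(W _ i) _ {i \geq 1}$ be independent standard complex Gaussian random measures on $(\mathbf T, \sigma)$ and set
\[
	X =
	\sum _ {i \geq 1}
		\int _ {\mathbf T}
			E _ i(\lambda) \, dW _ i(\lambda).
\]
The summability above guarantees that this series converges in $L ^ 2(\Omega; H)$, so that $X$ is a well-defined $H$-valued Gaussian vector; let $m$ be its distribution. The invariance $\mathcal A ^ {-1}(B)$-measure $=m(B)$ is the first payoff of the construction: since $\mathcal A$ commutes with the integral and $\mathcal A E _ i(\lambda) = \lambda E _ i(\lambda)$, we have $\mathcal A X = \sum _ i \int _ {\mathbf T} \lambda E _ i(\lambda) \, dW _ i(\lambda)$, and because each $W _ i$ is a complex Gaussian white noise, multiplication of its increments by the unimodular factor $\lambda$ leaves its law unchanged; hence $\mathcal A X$ and $X$ share the same distribution, i.e. $\mathcal A$ preserves $m$. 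Full support is the second payoff: the topological support of a Gaussian measure on $H$ is the closed linear span of the range of its covariance operator, which here coincides with $\overline{\mathrm{span}}\{E _ i(\lambda); i \geq 1, \lambda \in A\}$ for any $\sigma$-full set $A$, and this is all of $H$ by perfect spanning.

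The main obstacle is ergodicity, which I would settle through the spectral theory of Gaussian dynamical systems. The Koopman operator $U _ {\mathcal A} \colon g \mapsto g \circ \mathcal A$ on $L ^ 2(H, m)$ respects the Wiener chaos decomposition, and such a system is ergodic, indeed weakly mixing, as soon as the maximal spectral type of $U _ {\mathcal A}$ restricted to the first chaos is continuous. The first chaos is the closed Gaussian space spanned by the functionals $y \mapsto \pdtsca{y, x}$; computing $\pdtsca{E _ i(\lambda), \mathcal A ^ * x} = \pdtsca{\mathcal A E _ i(\lambda), x} = \lambda \pdtsca{E _ i(\lambda), x}$, one sees that under the natural isometry $x \mapsto (\pdtsca{E _ i(\cdot), x}) _ i$ into $\bigoplus _ i L ^ 2(\mathbf T, \sigma)$ the action induced by $\mathcal A$ on the first chaos is unitarily equivalent to multiplication by the coordinate function $\lambda$, whose spectral measure is absolutely continuous with respect to $\sigma$. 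Since $\sigma$ is continuous, this spectral type is non-atomic, so $U _ {\mathcal A}$ has no nonconstant eigenfunctions and the system is weakly mixing, hence ergodic. The delicate points to be checked carefully are the measurable construction of the eigenvector fields in the first step and the precise identification of the first-chaos spectral measure with one equivalent to (a restriction of) $\sigma$; once these are in place, the continuity of $\sigma$ delivers ergodicity at once.
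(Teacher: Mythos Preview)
The paper does not prove this theorem at all; it is quoted verbatim from \cite[Theorem 3.22]{bay06} and invoked as a black box in the proof of Theorem \ref{ergodicity}. Your sketch is a faithful outline of the original Bayart--Grivaux argument (measurable eigenvector fields, Gaussian stochastic integral, invariance via the unimodular rotational symmetry of complex white noise, full support from the range of the covariance, and ergodicity from continuity of the first-chaos spectral type), so there is nothing to compare on the paper's side.
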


	We are now ready to prove the ergodicity of $\mathcal T$.

	\begin{Thm}\label{ergodicity}
		If 
		$
			\omega
		$
		is bounded from below
		and if 
		$
			\sum _ {n = 0} ^ \infty
				1 / \omega(k2 ^ n)
			< + \infty
		$
		for every $k \geq 3$, then $\mathcal T$ is ergodic with respect to a Gaussian invariant measure with full support on $\mathcal X _ \omega$.
		
		This is in particular the case when $\omega = \omega _ 0$.
	\end{Thm}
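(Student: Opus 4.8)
The plan is to invoke Theorem \ref{perfectly spanning}: it suffices to exhibit a continuous probability measure $\sigma$ on $\mathbf T$ for which $\mathcal T$ admits a perfectly spanning set of eigenvectors associated to unimodular eigenvalues. I would take $\sigma$ to be the normalized arc-length measure on $\mathbf T$, which is plainly continuous (non-atomic). First I would check that the candidate eigenvectors are genuinely available on the circle: by Proposition \ref{eigenvectors}, $h_m(\mu,\cdot)$ belongs to $\mathcal X_\omega$ as soon as the series $\sum_n |\mu|^{2n}/\omega((2m+1)2^n)$ and $\sum_n |\mu|^{2n}/\omega((6m+4)2^n)$ converge. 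For $|\mu|=1$ these reduce to $\sum_n 1/\omega((2m+1)2^n)$ and $\sum_n 1/\omega((6m+4)2^n)$, which converge by hypothesis since $2m+1\geq 3$ and $6m+4\geq 3$ (for $m=0$ one uses instead $\sum_n 1/\omega(2^{n+2})<+\infty$). Hence $h_m(\mu,\cdot)$ is an eigenvector of $\mathcal T$ for the eigenvalue $\mu$ for every $\mu\in\mathbf T$ and every $m\geq 0$.

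The heart of the matter is then to show that for every Borel set $A\subseteq\mathbf T$ with $\sigma(A)=1$, the family $\{h_m(\mu,\cdot);\, m\geq 0,\ \mu\in A\}$ spans a dense subspace of $\mathcal X_\omega$. I would argue exactly as in the proof of Theorem \ref{densityeigenvectors}: take $f(z)=\sum_{k\geq 3}c_k z^k$ orthogonal to all these eigenvectors and form the same functions $\varphi_m(\mu)=\langle h_m(\mu,\cdot),f\rangle$ (for $m\geq 1$) and $\varphi_0(\mu)=\langle h_0(\mu,\cdot),f\rangle$. The decisive new input is that $\sum_n 1/\omega(k2^n)<+\infty$ is strictly stronger than boundedness from below: the Cauchy--Schwarz estimate used in Theorem \ref{densityeigenvectors}, now combined with this summability, shows that the Taylor coefficients of $\varphi_m$ and of $\varphi_0$ are absolutely summable, since each piece $\sum_n |c_{k2^n}|/\omega(k2^n)$ is bounded by $\bigl(\sum_n |c_{k2^n}|^2/\omega(k2^n)\bigr)^{1/2}\bigl(\sum_n 1/\omega(k2^n)\bigr)^{1/2}<+\infty$. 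Thus each $\varphi_m$ extends to a function holomorphic on $\mathbf D$ and continuous on $\overline{\mathbf D}$, and in particular lies in $\mathcal H^2(\mathbf D)$.

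By orthogonality, $\varphi_m$ and $\varphi_0$ vanish on $A$, a subset of $\mathbf T$ of full Lebesgue measure. The key step is the boundary uniqueness theorem for Hardy functions: a function in $\mathcal H^2(\mathbf D)$ that is not identically zero cannot vanish on a boundary set of positive Lebesgue measure, since its boundary values satisfy $\int_{\mathbf T}\log|\varphi|\,d\sigma>-\infty$. Therefore $\varphi_m\equiv 0$ for every $m\geq 1$ and $\varphi_0\equiv 0$, which forces all their Taylor coefficients to vanish, i.e. $c_{(6m+4)2^n}/\omega((6m+4)2^n)=c_{(2m+1)2^n}/\omega((2m+1)2^n)$ and $c_{2^{n+2}}=0$ for all $m\geq 1$ and $n\geq 0$. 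At this point Claim \ref{claim} applies verbatim and yields $c_k=0$ for all $k\geq 3$, so $f=0$ and the span is dense. This establishes the perfect spanning, and Theorem \ref{perfectly spanning} then produces the desired Gaussian invariant measure with full support for which $\mathcal T$ is ergodic. The main obstacle is precisely the passage from Theorem \ref{densityeigenvectors}, where the $\varphi_m$ are only known to be holomorphic inside $\mathbf D$ and can be tested against every $\mu\in\mathbf D$, to the present setting, where one may test only against a full-measure subset of $\mathbf T$: this is exactly what the summability hypothesis is for, as it upgrades the $\varphi_m$ to Hardy-class functions so that boundary uniqueness becomes available.
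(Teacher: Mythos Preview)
Your proof is correct and follows the same overall architecture as the paper's: reduce to perfect spanning via Theorem~\ref{perfectly spanning}, note that the summability hypothesis puts the eigenvectors $h_m(\mu,\cdot)$ in $\mathcal X_\omega$ for $|\mu|\leq 1$, form the functions $\varphi_m(\mu)=\langle h_m(\mu,\cdot),f\rangle$, show they vanish identically, and conclude via Claim~\ref{claim}.

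The one genuine difference is in how you force $\varphi_m\equiv 0$ from its vanishing on a full-measure subset $A\subset\mathbf T$. You invoke the Hardy-space boundary uniqueness theorem (a nonzero $\mathcal H^2$ function cannot vanish on a boundary set of positive measure). The paper instead uses the more elementary route that your own Cauchy--Schwarz estimate already makes available: since the Taylor coefficients of $\varphi_m$ are absolutely summable, $\varphi_m$ is continuous on $\overline{\mathbf D}$; a set of full Lebesgue measure is dense in $\mathbf T$, so by continuity $\varphi_m$ vanishes on all of $\mathbf T$, and the maximum modulus principle gives $\varphi_m\equiv 0$. Your argument is perfectly valid and in principle stronger (it would handle any $A$ of positive measure, not just full measure), but in this setting it is heavier machinery than needed, since continuity on the closed disk is already in hand. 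The paper packages this step as a separate Claim~\ref{claim2}, phrased for an arbitrary dense subset $D\subset\mathbf T$, which it then reuses in the proof of chaos (Theorem~\ref{chaos}) with $D$ the roots of unity.
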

	\begin{proof}
		By Theorem \ref{perfectly spanning}, it suffices to prove that $\mathcal T$ admits a perfectly spanning set of unimodular eigenvectors. 
		Since 
		$
			\sum _ {n = 0} ^ \infty
				1 / \omega(k2 ^ n)
			< + \infty
		$
		for every $k \geq 3$, 
		the function $h _ m(\mu, \cdot)$ belongs to $\mathcal X _ \omega$ for every $m \geq 0$ and every $\mu \in \mathbf C$ such that $\abs\mu \leq 1$ by Proposition \ref{eigenvectors}.
		We will prove that for every Borel set $A \subset \mathbf T$ such that $\sigma(A) = 1$, where $\sigma$ is the Lebesgue measure on the unit circle, the subspace
		$
			\text{span}[
				h _ m(\mu, \cdot) ;
				m \geq 0, \mu \in A
			]
		$ is dense in $\mathcal X _ \omega$.
		\begin{Clm}\label{claim2}
			Let $\omega$ be a weight on $\mathbf Z _ +$ which is bounded from below and such that
			$
				\sum _ {n = 0} ^ \infty
					1 / \omega(k2 ^ n)
				< + \infty
			$			
			for every $k \geq 3$.
			Then the subspace 
			$
				\text{span}[
					h _ m(\mu, \cdot) ;
					m \geq 0, 
					\mu \in D
				]
			$
			is dense in $\mathcal X _ \omega$ for every dense subset $D$ of $\mathbf T$.
		\end{Clm}
		\begin{proof}
			Let 
			$
				f \in 
				\text{span}[
				h _ m(\mu, \cdot);
				m \geq 0,
				\mu \in D
				] ^ \perp
			$
			in $\mathcal X _ \omega$
			with
			$
				f(z) =
				\sum _ {k = 3} ^ \infty
					c _ k z ^ k
			$.
			Our aim is to prove that $f = 0$.
			For every $\mu \in D$ and every $m \geq 1$
			\[
				\varphi _ m(\mu) \colon \!=
				\pdtsca{h _ m(\mu, \cdot), f} =
				\sum _ {n = 0} ^ \infty
					\left(
						\frac{
							\adhe{c _ {(6m + 4)2 ^ n}}
						}{
							\omega((6m + 4)2 ^ n)
						}
						-
						\frac{
							\adhe{c _ {(2m + 1)2 ^ n}}
						}{
							\omega((2m + 1)2 ^ n)
						}
					\right)
					\mu ^ n
				= 0
			\]
			and
			\[
				\varphi _ 0(\mu) \colon \! =
				\pdtsca{h _ 0(\mu, \cdot), f} =
				\sum _ {n = 0} ^ \infty
					\frac{
						\adhe{c _ {2 ^ {n + 2}}}
					}{
						\omega(2 ^ {n + 2})
					}
					\mu ^ n
				= 0.
			\]
			For every $k \geq 3$ the power series 
			$
				\sum _ {n = 0} ^ \infty
					(\adhe{c _ {k2 ^ n}} / \omega(k2 ^ n)) z ^ n
			$
			is uniformly convergent on the closed disk $\adhe{\mathbf D}$.
			Indeed for every $n \geq 0$ and every $z \in \adhe{\mathbf D}$
			\[
				\abs{
					\frac{
						\adhe{c _ {k2 ^ n}}
					}{
						\omega(k2 ^ n)
					}
					z ^ n
				}
				\leq
				\frac{
					\abs{c _ {k2 ^ n}}
				}{
					\sqrt{\omega(k2 ^ n)}
				}
				\frac 1 {\sqrt{\omega(k2 ^ n)}}
			\]
			and by the Cauchy-Schwarz's inequality
			\[
				\left(\sum _ {n = 0} ^ \infty
					\frac{
						\abs{c _ {k2 ^ n}}
					}{
						\sqrt{\omega(k2 ^ n)}
					}
					\frac 1 {\sqrt{\omega(k2 ^ n)}}
				\right) ^ 2
				\leq
				\sum _ {n = 0} ^ \infty
					\frac{
						\abs{c _ {k2 ^ n}} ^ 2
					}{
						\omega(k2 ^ n)
					}
				\sum _ {n = 0} ^ \infty
					\frac 1 {\omega(k2 ^ n)}
				< + \infty
			\]
			since $f$ belongs to $\mathcal X _ \omega$ and by assumption on $\omega$.
			Then $\varphi _ m$ and $\varphi _ 0$ are holomorphic on $\mathbf D$, continuous on $\adhe{\mathbf D}$ and they vanish on $D$ for every $m \geq 1$.
			Since $D$ is dense in the unit circle $\mathbf T$, the functions $\varphi _ m$ and $\varphi _ 0$ vanish on $\mathbf T$ by continuity.
			By the maximum modulus principle, $\varphi _ m$ and $\varphi _ 0$ identically vanish on $\mathbf D$ for every $m \geq 1$ and
			\[
				\frac{
					c _ {(6m + 4)2 ^ n}
				}{
					\omega((6m + 4)2 ^ n)
				}
				=
				\frac{
					c _ {(2m + 1)2 ^ n}
				}{
					\omega((2m + 1)2 ^ n)
				}
				\quad
				\text{and}
				\quad
					c _ {2 ^ {n + 2}}
				= 0
				\quad
				\text{for every $n \geq 0$.}
			\]
			It follows then from Claim \ref{claim} that $f = 0$, which gives that 
			$
				\text{span}[
					h _ m(\mu, \cdot) ;
					m \geq 0, 
					\mu \in D
				]
			$
			is dense in $\mathcal X _ \omega$.
		\end{proof}
		Thus since a Borel set $A \subset \mathbf T$ satisfying $\sigma(A) = 1$ is dense in $\mathbf T$, Claim \ref{claim2} gives that $\mathcal T$ admits a perfectly spanning set of unimodular eigenvectors, which concludes the proof.
	\end{proof}

	We now will link the ergodicity of an operator to its frequent hypercyclicity.
	Let $\mathcal A \in \mathcal B(H)$ be ergodic with respect to an invariant measure with full support.
	First it follows from the Birkhoff's transitivity theorem that $\mathcal A$ is hypercyclic.
	Then it follows from Birkhoff's pointwise ergodic theorem that:
	
	\begin{Thm}[{\cite[Proposition 6.23]{bay09}}]\label{ergodicfrequent}
		If an operator $\mathcal A \in \mathbf B(H)$ is ergodic with respect to a probability measure $m$ on $H$ with full support, then $\mathcal A$ is frequently hypercyclic.
		Moreover the frequently hypercyclic vectors of $\mathcal A$ form a set of full measure for $m$.
	\end{Thm}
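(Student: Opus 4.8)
The statement is a standard consequence of Birkhoff's pointwise ergodic theorem, and the plan is to combine that theorem with the separability of $H$ and the full-support hypothesis. First I would fix a countable base $(U _ j) _ {j \geq 1}$ of non-empty open subsets of $H$, which exists since $H$ is separable. Because $m$ has full support, $m(U _ j) > 0$ for every $j \geq 1$. I would also record that, by definition, ergodicity of $\mathcal A$ includes the assumption that $\mathcal A$ preserves $m$, so that Birkhoff's pointwise ergodic theorem does apply to $\mathcal A$ on the probability space $(H, \mathcal B, m)$.

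Next I would apply the ergodic theorem to each indicator function $\mathbf 1 _ {U _ j} \in L ^ 1(m)$. For every fixed $j$, ergodicity produces a Borel set $B _ j$ with $m(B _ j) = 1$ such that for every $x \in B _ j$,
\[
	\frac 1 N
	\sum _ {n = 0} ^ {N - 1}
		\mathbf 1 _ {U _ j}(\mathcal A ^ n x)
	\to
	\int _ H \mathbf 1 _ {U _ j} \, dm
	=
	m(U _ j)
	\quad \text{as } N \to + \infty.
\]
Setting $B = \bigcap _ {j \geq 1} B _ j$, the set $B$ has full measure as a countable intersection of full-measure sets.

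I would then verify that every $x \in B$ is a frequently hypercyclic vector. Given an arbitrary non-empty open set $U$, choose $j$ with $U _ j \subset U$; the orbit of $x$ visits $U$ at least as often as it visits $U _ j$, and the averages with denominators $N$ and $N + 1$ share the same limit, so
\[
	\liminf _ {N \to + \infty}
		\frac{
			\text{card}\{
				0 \leq n \leq N ;
				\mathcal A ^ n x \in U
			\}
		}{
			N + 1
		}
	\geq
	m(U _ j)
	> 0,
\]
where the final strict inequality is exactly the full-support hypothesis. Hence every $x \in B$ is frequently hypercyclic. This shows that the frequently hypercyclic vectors of $\mathcal A$ form a set of full measure for $m$, and since $m(H) = 1 > 0$ this set is non-empty, so $\mathcal A$ is frequently hypercyclic.

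The only genuine input is Birkhoff's pointwise ergodic theorem, so there is no serious obstacle. The two points requiring care are that ergodicity already carries measure-preservation, so that the theorem is applicable, and that an arbitrary open set must be approximated \emph{from inside} by a member of the countable base, so that the limit inferior is bounded \emph{below} by the positive quantity $m(U _ j)$; approximating from outside would yield the wrong inequality.
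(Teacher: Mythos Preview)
Your argument is correct and is exactly the standard proof: Birkhoff's pointwise ergodic theorem applied to the indicators of a countable base, combined with the full-support hypothesis. Note that the paper does not actually prove this theorem; it merely quotes it from \cite[Proposition 6.23]{bay09} after remarking that it ``follows from Birkhoff's pointwise ergodic theorem,'' which is precisely the route you take.
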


	We deduce from Theorems \ref{ergodicity} and \ref{ergodicfrequent} the following result:
	
	\begin{Thm}\label{frequenthypercyclicity}
		If 
		$
			\omega
		$
		is bounded from below
		and if 
		$
		\sum _ {n = 0} ^ \infty
		1 / \omega(k2 ^ n)
		< + \infty
		$
		for every $k \geq 3$, then $\mathcal T$ is frequently hypercyclic on $\mathcal X _ \omega$.
		
		In particular if $\omega = \omega _ 0$, $\mathcal T$ is frequently hypercyclic on $\mathcal X$.
	\end{Thm}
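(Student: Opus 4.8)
The plan is to obtain this result as an immediate consequence of the two preceding theorems, since the hypotheses imposed on $\omega$ here coincide exactly with those of Theorem \ref{ergodicity}. First I would invoke Theorem \ref{ergodicity}: under the assumptions that $\omega$ is bounded from below and that $\sum_{n=0}^\infty 1/\omega(k2^n) < +\infty$ for every $k \geq 3$, that theorem produces a Gaussian probability measure $m$ on $\mathcal X_\omega$ with full support for which $\mathcal T$ is ergodic. All the substantive work at this stage has already been done there, namely verifying via Claim \ref{claim2} that $\mathcal T$ admits a perfectly spanning set of eigenvectors associated to unimodular eigenvalues, and then applying Theorem \ref{perfectly spanning}, so nothing new needs to be established.

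Then I would apply Theorem \ref{ergodicfrequent} to the operator $\mathcal T$ together with the measure $m$: since $\mathcal T$ is ergodic with respect to the full-support probability measure $m$, that theorem directly yields that $\mathcal T$ is frequently hypercyclic on $\mathcal X_\omega$, and in fact that its frequently hypercyclic vectors form a set of full $m$-measure. This settles the general statement.

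For the special case $\omega = \omega_0$, I would simply check that the two hypotheses are met. The weight $\omega_0(n) = (n+1)/\pi$ satisfies $\omega_0(n) \geq 1/\pi$, so it is bounded from below, and for every $k \geq 3$ one has
\[
	\sum_{n=0}^\infty \frac{1}{\omega_0(k2^n)} = \pi \sum_{n=0}^\infty \frac{1}{k2^n + 1} < +\infty
\]
by comparison with the geometric series $\sum_{n \geq 0} 2^{-n}$. Hence the assumptions of the general statement hold and $\mathcal T$ is frequently hypercyclic on $\mathcal X$.

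Since every step amounts to a direct appeal to an already-established result, there is no genuine obstacle left in this final argument; the whole difficulty of the theorem is concentrated in the proof of ergodicity (Theorem \ref{ergodicity}), which itself rests on the density of the eigenvectors established in Theorem \ref{densityeigenvectors}.
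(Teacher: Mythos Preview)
Your proposal is correct and follows exactly the same approach as the paper: the paper simply states that the result is deduced from Theorems \ref{ergodicity} and \ref{ergodicfrequent}, which is precisely what you do. Your additional verification of the $\omega = \omega_0$ case is a harmless elaboration of something the paper leaves implicit.
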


	We finish this section by proving that, under the assumptions of Theorem \ref{ergodicity}, $\mathcal T$ is chaotic.
	
	\begin{Def}[{\cite[Definition 2.29]{gro11}}]
		An operator $\mathcal A \in \mathcal B(X)$ is said to be \textit{chaotic} if $\mathcal A$ is hypercyclic and has a dense set of periodic points.
	\end{Def}
	
	\begin{Thm}\label{chaos}
		If 
		$
		\omega
		$
		if bounded from below
		and if 
		$
		\sum _ {n = 0} ^ \infty
		1 / \omega(k2 ^ n)
		< + \infty
		$
		for every $k \geq 3$, then $\mathcal T$ is chaotic on $\mathcal X _ \omega$.
		
		In particular if $\omega = \omega _ 0$, then $\mathcal T$ is chaotic on $\mathcal X$.
	\end{Thm}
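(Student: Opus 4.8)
The plan is to combine the hypercyclicity already at our disposal with the production of a dense set of periodic points. Hypercyclicity is immediate: the hypothesis $\sum_{n=0}^\infty 1/\omega(k2^n)<+\infty$ forces $1/\omega(k2^n)\to 0$, hence $\omega(k2^n)\to+\infty$ for every $k\geq 3$, so Theorem \ref{hypercyclicity} applies. (Alternatively, hypercyclicity follows from ergodicity with full support, Theorem \ref{ergodicity}, via Birkhoff's transitivity theorem.) Thus the only remaining task is to exhibit a dense set of periodic points of $\mathcal T$.

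The key observation is that every eigenvector of $\mathcal T$ attached to a root of unity is a periodic point. Indeed, if $\mu$ is a root of unity, say $\mu^N=1$, then by Proposition \ref{eigenvectors} we have $\mathcal T h_m(\mu,\cdot)=\mu h_m(\mu,\cdot)$, so $\mathcal T^N h_m(\mu,\cdot)=\mu^N h_m(\mu,\cdot)=h_m(\mu,\cdot)$. More generally, any finite linear combination of such eigenvectors is again a periodic point, with period the least common multiple of the orders of the roots of unity involved. Consequently the subspace $P=\text{span}[h_m(\mu,\cdot);\ m\geq 0,\ \mu \text{ a root of unity}]$ consists entirely of periodic points of $\mathcal T$.

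It remains to verify that these eigenvectors lie in $\mathcal X_\omega$ and that $P$ is dense. For a unimodular $\mu$ the two series appearing in Proposition \ref{eigenvectors} reduce to $\sum_{n\geq 0}1/\omega((2m+1)2^n)$ and $\sum_{n\geq 0}1/\omega((6m+4)2^n)$ (respectively to $\sum_{n\geq 0}1/\omega(2^{n+2})$ when $m=0$), all of which converge by the summability hypothesis, since the relevant base indices $2m+1$, $6m+4$ and $4$ are at least $3$. Hence $h_m(\mu,\cdot)\in\mathcal X_\omega$ for every $m\geq 0$ and every $\mu\in\mathbf T$. Since the set $D$ of roots of unity is dense in the unit circle $\mathbf T$, Claim \ref{claim2} (established within the proof of Theorem \ref{ergodicity}) shows that $P=\text{span}[h_m(\mu,\cdot);\ m\geq 0,\ \mu\in D]$ is dense in $\mathcal X_\omega$.

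Putting these together, $\mathcal T$ possesses a dense set of periodic points and is hypercyclic, hence chaotic; taking $\omega=\omega_0$ gives the chaoticity of $\mathcal T$ on $\mathcal X$. The only point requiring any care is the density of $P$, which is exactly where Claim \ref{claim2} does the work; checking that finite linear combinations of root-of-unity eigenvectors are periodic is routine, so I do not expect a genuine obstacle here.
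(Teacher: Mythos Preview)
Your proof is correct and follows essentially the same approach as the paper: hypercyclicity from Theorem \ref{hypercyclicity}, membership of the $h_m(\mu,\cdot)$ in $\mathcal X_\omega$ for unimodular $\mu$ via Proposition \ref{eigenvectors}, and density of the span of root-of-unity eigenvectors via Claim \ref{claim2}. The only cosmetic difference is that the paper invokes \cite[Proposition 2.33]{gro11} to identify the periodic points with this span, whereas you verify directly that finite linear combinations of root-of-unity eigenvectors are periodic, which is the elementary inclusion that is actually needed.
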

	\begin{proof}
		Since 
		$
			\sum _ {n = 0} ^ \infty
				1 / \omega(k2 ^ n)
			< + \infty
		$
		for every $k \geq 3$, the function $h _ m(\mu, \cdot)$ belongs to $\mathcal X _ \omega$ for every $m \geq 0$ and every $\mu \in \mathbf C$ such that $\abs \mu \leq 1$ by Proposition \ref{eigenvectors}.
		By Theorem \ref{hypercyclicity}, $\mathcal T$ is hypercyclic and it suffices to show that set of its perdiodic points Per($\mathcal T$) is dense in $\mathcal X _ \omega$.
		It follows from \cite[Proposition 2.33]{gro11} that
		$
			\text{Per}(\mathcal T) =
			\text{span}[
				h _ m(\mu, \cdot) ; 
				m \geq 0, 
				\mu \in \{
					e ^ {\alpha i \pi}; \alpha \in \mathbf Q
				\}
			],
		$
		and its density is given by Claim \ref{claim2}.
	\end{proof}
	
	\section{Open questions}
		We finish this paper by presenting a few open questions connected to the results we have presented.
		\begin{Ques}
			In the case where $\omega = \omega _ 0$, it follows from Propositions \ref{norm} and \ref{eigenvectors} that 
			\[
				\{
					z \in \mathbf C ;
					\abs z < \sqrt 2
				\}
				\subset
				\sigma _ p(\mathcal T)
				\subset 
				\sigma(\mathcal T)
				\subset 
				\{
					z \in \mathbf C ;
					\abs z \leq 8 / 3
				\}.
			\]
			What is the exact value of the spectral radius $\rho(\mathcal T)$ of $\mathcal T$ ?
			Is it true that $\rho(\mathcal T) = \sqrt 2$ ?
			
			In order to answer this question, one could perhaps build on the following proposition, which provides an explicit expression for the norm of the iterates $\mathcal T ^ n$ of $\mathcal T$:
			\begin{Prop}
				If $\omega = \omega _ 0$, then for every $n \geq 1$
				\[
				\| \mathcal T ^ n\| ^ 2 =
				\max _ {0 \leq r \leq 3 ^ n - 1}
				\sum _ {a\xi + b \in P ^ {(n)} _ {n, r} \subset \mathbf Q [\xi]}
				\frac a {3 ^ n}
				\]
				where 
				$
				P ^ {(n)} _ {0, r} = \{3 ^ n\xi + r\}
				$
				and
				$
				P^ {(n)} _ {k + 1, r} =
				\{
				2P ; P \in P ^ {(n)} _ {k, r}
				\}
				\cup 
				\{
				(2P - 1) / 3 ;
				P \in P ^ {(n)} _ {k, r}, 
				P(0) \equiv 2 \mod 3
				\}
				$ 
				for every $k \geq 0$ and $0 \leq r \leq 3 ^ n - 1$.
			\end{Prop}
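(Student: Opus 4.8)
The plan is to combine the explicit norm formula of Proposition \ref{norm} with a telescoping identity for the ratios $(j+1)/(k+1)$ along trajectories of $T$. Since $\omega _ 0(m) = (m+1)/\pi$, Proposition \ref{norm} gives
\[
	\|\mathcal T ^ n\| ^ 2 =
	\sup _ {k \geq 3}
		\sum _ {T ^ n(j) = k}
			\frac{j + 1}{k + 1},
\]
so everything reduces to controlling, for each $k$, the preimages $j$ with $T ^ n(j) = k$ together with the weights $(j+1)/(k+1)$.

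First I would record an elementary but crucial telescoping identity. Along a trajectory $j = j _ 0 \to j _ 1 \to \cdots \to j _ n = k$ with $j _ {i+1} = T(j _ i)$ one has
\[
	\frac{j + 1}{k + 1} =
	\prod _ {i = 0} ^ {n - 1}
		\frac{j _ i + 1}{j _ {i+1} + 1},
\]
and a direct case check shows that each factor equals $\tfrac 2 3$ when $j _ i$ is odd and equals $2 - \tfrac{1}{j _ {i+1} + 1} < 2$ when $j _ i$ is even. Hence, if the trajectory uses $s$ odd steps, then $(j+1)/(k+1) \leq 2 ^ {n - s}(2/3) ^ s$, the even factors tending to $2$ as the trajectory values grow. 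This replaces any monotonicity estimate on the ratio by a transparent product bound.

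The heart of the argument is the polynomial encoding of preimages. Every preimage of a value $v$ under $T$ is either $2v$ (always) or $(2v - 1)/3$ (exactly when $v \equiv 2 \bmod 3$, in which case it is automatically odd). Writing $k = 3 ^ n t + r$ with $r = k \bmod 3 ^ n$, I claim that the $n$-step preimages of $k$ are precisely $\{P(t) ; P \in P ^ {(n)} _ {n, r}\}$, which I would prove by induction on the depth. The point making this uniform in $t$ is that every $P$ occurring at depth strictly less than $n$ has leading coefficient divisible by $3$: starting from $3 ^ n \xi + r$, each even step multiplies the leading coefficient by $2$ and each odd step by $2/3$, so after fewer than $n$ steps the power of $3$ is still positive. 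Consequently $P(t) \equiv P(0) \bmod 3$ for every $t$, so whether the branch $(2P - 1)/3$ is available depends only on the constant term $P(0) \bmod 3$ and not on $t$; the preimage tree is therefore the same for every $k$ in the residue class $r$, which is exactly what the recursion defining the sets $P ^ {(n)} _ {k, r}$ records. Tracking the leading coefficient then shows that a path with $s$ odd steps yields $a = 2 ^ n 3 ^ {n - s}$, so that $a / 3 ^ n = 2 ^ {n - s}(2/3) ^ s$, matching the bound of the previous paragraph.

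Finally I would combine the two ingredients. For $k = 3 ^ n t + r$, summing the per-trajectory bound over all preimages gives $\sum _ {T ^ n(j) = k}(j+1)/(k+1) \leq \sum _ {a\xi + b \in P ^ {(n)} _ {n, r}} a / 3 ^ n$, where for small $k$ some trajectories dip below $3$ and are simply omitted, which only decreases the sum. On the other hand, as $t \to + \infty$ each preimage $P(t) = at + b$ and every intermediate iterate tend to infinity, so each term $(P(t) + 1)/(k+1) \to a / 3 ^ n$ and the whole sum converges to $\sum a / 3 ^ n$. Thus the supremum over the residue class $r$ equals $\sum _ {a\xi + b \in P ^ {(n)} _ {n, r}} a / 3 ^ n$, and taking the maximum over $0 \leq r \leq 3 ^ n - 1$ yields the stated formula. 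I expect the main obstacle to be the bookkeeping in the polynomial encoding, in particular verifying that the branching structure is constant across a residue class modulo $3 ^ n$ (the divisibility-by-$3$ observation) and matching the number of odd steps with the leading coefficient; by contrast, the telescoping identity renders the analytic part essentially automatic.
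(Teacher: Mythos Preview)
Your proof is correct and follows the same overall architecture as the paper's: both start from the norm formula of Proposition \ref{norm}, both set up the polynomial parametrisation of $n$-step preimages by residue class modulo $3^n$, and both conclude by showing that for each residue class the supremum over $m$ of $\sum_{T^n(j)=3^nm+r}(j+1)/(3^nm+r+1)$ equals its limit $\sum a/3^n$.

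The genuine difference lies in how the upper bound $(j+1)/(k+1)\leq a/3^n$ is obtained. The paper proves directly, by induction on the depth of the recursion, the algebraic inequality $a(r+1)\geq 3^n(b+1)$ for each $a\xi+b\in P^{(n)}_{n,r}$; this is precisely the condition for $m\mapsto(am+b+1)/(3^nm+r+1)$ to be nondecreasing, so the supremum is the limit $a/3^n$. You instead factor the ratio along the trajectory as a telescoping product, observe that each odd step contributes exactly $2/3$ and each even step contributes $2-1/(j_{i+1}+1)<2$, and then match the resulting bound $2^{n-s}(2/3)^s$ with $a/3^n$ via the leading-coefficient count. Your route gives a bit more structural insight (it ties the weight directly to the odd/even pattern of the path) and makes the limit argument transparent, while the paper's inductive inequality is shorter and yields monotonicity rather than just an upper bound. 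Your justification that the branching tree depends only on $r$ (because the leading coefficient stays divisible by $3$ until depth $n$) is more explicit than what the paper writes; the paper simply asserts the polynomial description of the preimage set. Your remark about trajectories ``dipping below $3$'' is unnecessary: if $T^n(j)=k\geq 3$ then every intermediate iterate is automatically $\geq 3$, since $\{0,1,2\}$ is $T$-invariant.
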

			\begin{proof}
				According to Proposition \ref{norm}
				\[
				\|
				\mathcal T ^ n 
				\| ^ 2 =
				\max _ {0 \leq r \leq 3 ^ n - 1}
				\left\{
				\sup _ {m \geq \delta _ r}
				\sum _ {T ^ n(j) = 3 ^ nm + r}
				\frac{
					j + 1
				}{
					3 ^ n m + r + 1
				}
				\right\}
				\]
				where $\delta _ r = 1$ if $r \leq 2$ and $\delta _ r = 0$ otherwise.
				Since $T(j) = k$ if and only if $j = 2k$ or $j = (2k - 1) / 3$ if $k \equiv 2 \mod 3$, the sets $P ^ {(n)} _ {n, r}$ are such that 
				$
				T ^ n(j) = 3 ^ n m + r
				$ 
				if and only if 
				$
				j \in \{P(m) ; P \in P ^ {(n)} _ {n, r}\}
				$
				for every $0 \leq r \leq 3 ^ n - 1$ and every $m \geq \delta _ r$.
				Then
				\begin{align*}
				\|
				\mathcal T ^ n 
				\| ^ 2 
				& =
				\max _ {0 \leq r \leq 3 ^ n - 1}
				\left\{
				\sup _ {m \geq \delta _ r}
				\sum _ {a\xi + b \in P ^ {(n)} _ {n, r}}
				\frac{
					am + b + 1
				}{
					3 ^ n m + r + 1
				}
				\right\}.
				\end{align*}
				Fix $0 \leq r \leq 3 ^ n - 1$.
				If $\alpha, \beta, \gamma, \delta > 0$, one can remark that a sequence $((\alpha m + \beta) / (\gamma m + \delta)) _ {m \geq 0}$ is strictly increasing or constant if and only if $\alpha \delta \geq \beta \gamma$.
				We will prove by recursion that $a(r + 1) \geq 3 ^ n (b + 1)$ for every $a\xi + b \in P ^ {(n)} _ {n, r}$.
				Indeed this is firstly true for $P ^ {(n)} _ {0, r} = \{3 ^ n \xi + r\}$. 
				Besides if $P ^ {(n)} _ {k, r}$ satisfies it, let $P\in P ^ {(n)} _ {k + 1, r}$.
				Either $P = 2a\xi + 2b$ with $a\xi + b \in P ^ {(n)} _ {k, r}$, or $P = 2a\xi / 3 + (2b - 1) / 3$ with $a\xi + b \in P ^ {(n)} _ {k, r}$ and $b \equiv 2 \mod 3$.
				On the one hand we would have 
				$
				2a(r + 1) \geq 3 ^ n(2b + 2) > 3 ^ n(2b + 1)
				$, 
				and on the other hand we would have 
				$
				2a(r + 1) / 3 \geq 3 ^ n(2b + 2) / 3 \geq 3 ^ n((2b - 1) / 3 + 1)
				$.
				Then this is true for $P ^ {(n)} _ {k + 1, r}$, which proves that $P ^ {(n)} _ {n, r}$ satisfies it.
				So for every $a\xi + b \in P ^ {(n)} _ {n, r}$
				\[
				\sup _ {m \geq \delta _ r}
				\frac{
					am + b + 1
				}{
					3 ^ nm + r + 1
				}
				=
				\lim _ {m \to + \infty}
				\frac{
					am + b + 1
				}{
					3 ^ nm + r + 1
				}
				=
				\frac a {3 ^ n},
				\]
				which concludes the proof.
			\end{proof}
		\end{Ques}

		\begin{Ques}
			If
			$
				\liminf _ {n \to + \infty}
					\omega(n)
				= 0,
			$
			could $\mathcal T$ be bounded, hypercyclic or ergodic ?
			Could $\mathcal T ^ *$ have any eigenvalue ?
		\end{Ques}	
	
		The operator $\mathcal T$ can actually be seen as acting on a space of sequences $(c _ n) _ {n \geq 3}$ instead of on a space of holomorphic functions 
		$
			f \colon z \mapsto
			\sum _ {n = 3} ^ \infty
				c _ n z ^ n
		$.
		\begin{Ques}
			What would remain from these results if we considered $\mathcal T$ as acting on the space of complex sequences $\mathbf C ^ \mathbf N$ ?
		\end{Ques}
	
	\begin{bibdiv}
		\begin{biblist}
			\bib{app02}{article}{
				AUTHOR = {D. Applegate and J. C. Lagarias},
				TITLE = {Lower bounds for the total stopping time of $3x + 1$ iterates},
				JOURNAL = {Math. of Comp.},
				FJOURNAL = {Mathematics of Computation},
				VOLUME = {72},
				YEAR = {2002},
				PAGES = {1035 - 1049},
			}
		
			\bib{bay06}{article}{
				AUTHOR = {F. Bayart and S. Grivaux}
				TITLE = {Frequently hypercyclic operators}
				JOURNAL = {Amer. Math. Soc.}
				FJOURNAL = {American Mathematical Society}
				VOLUME = {358},
				YEAR = {2006},
				PAGES = {5083 - 5117},
			}
		
			\bib{bay09}{book}{
				AUTHOR = {F. Bayart and \'E. Matheron},
				TITLE = {Dynamics of linear operators},
				SERIES = {Cambridge Tracts in Mathematics},
				VOLUME = {179}
				PUBLISHER = {Cambridge University Press},
				YEAR = {2009},
			}
		
			\bib{bay11}{article}{
				AUTHOR = {F. Bayart and \'E. Matheron},
				TITLE = {Mixing operators and small subsets of the circle},
				JOURNAL = {preprint available at https://arxiv.org/abs/1112.1289v1},
				YEAR = {2011},
			}
		
			\bib{ber94}{article}{
				AUTHOR = {L. Berg and G. Meinardus},
				TITLE = {Functional equations connected with the Collatz problem},
				JOURNAL = {Res. in Math.},
				FJOURNAL = {Results in Mathematics},
				VOLUME = {25},
				YEAR = {1994},
				PAGES = {1 - 12},
			}
		
			\bib{eli93}{article}{
				AUTHOR = {S. Eliahou},
				TITLE = {The $3x + 1$ problem: new lower bounds on nontrivial cycle lengths},
				JOURNAL = {Disc. Math.},
				FJOURNAL = {Discrete Mathematics},
				VOLUME = {118},
				YEAR = {1993},
				PAGES = {45 - 56},
			}
		
			\bib{gro11}{book}{
				AUTHOR = {K.-G. Grosse-Erdmann and A. Peris Manguillot},
				TITLE = {Linear chaos},
				SERIES = {Universitext},
				PUBLISHER = {Springer London},
				YEAR = {2011},
			}

			\bib{kra02}{article}{
				AUTHOR = {I. Krasikov and J. C. Lagarias},
				TITLE = {Bounds for the $3x + 1$ problem using difference inequalities},
				JOURNAL = {Acta Arit.},
				FJOURNAL = {Acta Arithmetica},
				VOLUME = {109},
				YEAR = {2002},
				PAGES = {237 - 258},
			}
		
			\bib{lag10}{article}{
				AUTHOR = {J. C. Lagarias},
				TITLE = {The $3x + 1$ problem: an overview},
				JOURNAL = {The Ultimate Challenge: The $3x + 1$ Problem. Edited by J. C. Lagarias, Amer. Math. Soc., Providence, RI},
				YEAR = {2010},
				PAGES = {3 - 29},
			}
		
			\bib{mur13}{article}{
				AUTHOR = {M. Murillo-Arcila and A. Peris},
				TITLE = {Strong mixing measures for linear operators and frequent hypercyclicity},
				JOURNAL = {J. Math. Anal. Appl.},
				FJOURNAL = {Journal of Mathematical Analysis and Applications},
				VOLUME = {398},
				YEAR = {2013},
				PAGES = {462 - 465},
			}

			\bib{nek21}{article}{
				AUTHOR = {M. Neklyudov},
				TITLE = {Functional analysis approach to the Collatz conjecture},
				JOURNAL = {preprint available at https://arxiv.org/abs/2106.11859v9},
				YEAR = {2021},
			}
		
			\bib{oli10}{article}{
				AUTHOR = {T. Oliveira e Silva},
				TITLE = {Empirical verification of the $3x + 1$ and related conjectures},
				JOURNAL = {The Ultimate Challenge: The $3x + 1$ Problem. Edited by J. C. Lagarias, Amer. Math. Soc., Providence, RI},
				YEAR = {2010},
				PAGES = {189-207}				
			}
		
			\bib{tao22}{article}{
				AUTHOR = {T. Tao},
				TITLE = {Almost all orbits of the Collatz map attain almost bounded values},
				JOURNAL = {Forum of Math., Pi}
				FJOURNAL = {Forum of Mathematics, Pi},
				YEAR = {2022},
			}
		\end{biblist}
	\end{bibdiv}
							
\end{document}